\documentclass[11pt,a4paper]{article}
\usepackage[latin1]{inputenc}
\usepackage[english]{babel}
\usepackage{amsmath}
\usepackage{amsfonts}
\usepackage{amssymb}
\usepackage{amsthm}
\usepackage{graphicx}
\usepackage{appendix}
\usepackage{geometry} 
\usepackage{centernot}
\usepackage{mathtools}
\usepackage{ stmaryrd }
\geometry{portrait, margin=1in}
\newtheorem{theorem}{Theorem}[section]
\newtheorem{remark}{Remark}[section]

\newtheorem{definition}{Definition}[section]

\newtheorem{lemma}{Lemma}[section]
\newtheorem{corollary}{Corollary}[section]

\author{Jason Ledwidge \footnote{Eberhard Karls Universit{\"a}t T{\"u}bingen, T{\"u}bingen, Germany. \textit{Email address}: jason.ledwidge@math.uni-tuebingen.de}}
\title{The sharp Li-Yau equality on Shrinking Ricci Solitons with applications}
\date{}
\begin{document}
	\makeatletter
	\maketitle 
	\@author
	\@title 
	\makeatother 
	\begin{abstract}
		We prove that the sharp Li-Yau equality holds for the conjugate heat kernel on shrinking Ricci solitons without any curvature or volume assumptions. This quantity yields several estimates which allows us to classify four dimensional, non-compact shrinking Ricci solitons, which arise as Type I singularity models to the Ricci flow.
	\end{abstract}
	\tableofcontents

\section{Introduction}
R.Hamilton's Ricci flow, 
\begin{equation}\label{Ricci flow}
\partial_{t}g = - 2\mathrm{Ric}_{g}
\end{equation}
can formally be seen as a degenerate nonlinear heat equation for the Riemannian metric, $g,$ in a Harmonic coordinate system. The metric $g$ satisfies the degenerate elliptic system of equations for the metric in this coordinate system
\begin{equation}\label{Harmonic coordinates}
\Delta g = - 2\mathrm{Ric} + Q(g,Dg),
\end{equation}
where the term $Q(g,Dg)$ is quadratic in the covariant derivatives of $g$ and so is of lower order. Hence one can formally write (as in \cite{formation} Introduction)
\begin{equation}
"\partial_{t}g = \Delta g."
\end{equation}
The degeneracy of the Ricci flow stems from the fact that for a diffeomorphism 
\begin{equation*}
\varPsi: N \mapsto N
\end{equation*} 
the curvature tensor is such that (see \cite{Besse} Chapter 5B, equation 5.4)
\begin{equation*}
\varPsi^{*}(\mathrm{Rm}(g)) = (\mathrm{Rm}(\varPsi^{*}g)),
\end{equation*}
where $\mathrm{Rm}$ denotes the Riemannian curvature tensor. Hence \eqref{Ricci flow} is invariant under actions of the diffeomorphism group, which is infinite dimensional. \newline 
Remarkably, G.Perelman who studied the coupled system 
\[    
\begin{cases}
\partial_{t}g = -2\mathrm{Ric} \\
\partial_{t}u = -\Delta u + \mathrm{R}u,
\end{cases}
\] 
where the second equation is the conjugate heat equation and $u$ is the conjugate heat kernel, 
\begin{equation}
u = (-4\pi t)^{-\frac{n}{2}}e^{-f} 
\end{equation}
stated in \cite{RIC} Remark 9.6 that the Ricci flow, a degenerate parabolic system of equations, can be characterised by the short time asymptotics of the conjugate heat kernel, which is the solution of a parabolic scalar equation: \\

"Ricci flow can be characterized among all other evolution equations by the infinitesimal behavior of the fundamental solutions of the conjugate heat equation $\ldots$ Consider the fundamental solution $u = (-4\pi t)^{-\frac{n}{2}}e^{-f} \ldots,$ starting as $\delta$-function at some point $(p,0) \ldots$ The Ricci flow is characterized by the condition $\big(\square \bar{f} + \frac{\bar{f}}{t}\big)(q,t) = o(1),$ in fact, it is $O(|pq|^{2} + |t|)."$ \footnote{ Where $\bar{f} = f - \int f \ u\mathrm{dvol}.$}\\ 

In Euclidean space - a static solution to the Ricci flow - the heat kernel is such that, $f=\frac{|x|^{2}}{4t},$ and $\big(\square \bar{f} + \frac{\bar{f}}{t}\big)(q,t) = 0.$  Hence, the Remark implies that for short times, the conjugate heat kernel along the Ricci flow behaves like the Euclidean heat kernel. The analysis of the logarithm of the heat kernel - i.e. the function $f$ in the context of the Ricci flow - plays a crucial role in geometric analysis, such as in the Li-Yau inequality. \\ 
 
Recall the Li-Yau inequality under non-negative Ricci curvature (see \cite{LY} Theorem 1.3) and let $H$ be the a positive positive solution to the heat equation, $\partial_{t}H = \Delta H.$ Then, 
\begin{equation}\label{LYI}
|\nabla \log H|^{2}-\partial_{t}\log H = -\Delta \log H  \leq \frac{n}{2t}.
\end{equation}
If $H$ is the the heat kernel - which is unique under the curvature assumption - then \eqref{LYI} coupled with Varadhan's short time asymptotic formula, (see \cite{SRS} Theorem 2.2) 
\begin{equation} \label{Varadhan}
\lim_{t \mapsto 0} 4t \log H(x,y,t) = - d^{2}_{g}(x,y),
\end{equation}
implies the Laplacian comparison formula
\begin{equation}
-\lim_{t \mapsto 0}4t\Delta \log H(x,y,t) \leq 2n.
\end{equation}
Thus for static Riemannian manifolds with non-negative Ricci curvature, the short time Euclidean behaviour of the heat kernel encodes strong geometric properties of the manifold. \\

In the context of the Ricci flow on closed manifolds, G.Perelman's monotonicity formula - see \cite{RIC} Section 3; we will introduce this quantity in the next section - is dependent on the conjugate heat kernel and its logarithm. The monotonicity formula expresses that the $\mathcal{W}$ functional is non-decreasing along the Ricci flow coupled to the conjugate heat equation and stationary if $N$ is a shrinking Ricci soliton.\\ 

Ricci solitons are ancient solutions to the Ricci flow modulo a diffeomorphism. More precisely, suppose that $(M,g(\tau))$ is a solution to the Ricci flow with initial condition $g(0)=g_{0}.$ Then the solution $g(\tau)$ is a soliton if there exists a one parameter family of diffeomorphisms, $\varPsi_{\tau},$ such that 
\begin{equation*}
	\varPsi_{\tau} : (M,g_{0}) \longmapsto (M,g(\tau))
\end{equation*}
and
\begin{equation}
	g(\tau) = (1 +\tau)\varPsi^{*}_{\tau} g_{0}, \label{soliton metric}
\end{equation} 
with $\varPsi(0) = \text{Identity}.$ \newline 
Remark 9.6 and Sections 3 and 4 of \cite{RIC} serves as a motivation to study shrinking Ricci solitons whose diffeomorphisms, $\Psi_{\tau},$ are generated by the logarithm of the conjugate heat kernel. From henceforth, a shrinking soliton will always be a complete, gradient soliton whith the potential function $f$ coming from the conjugate heat kernel, 
\begin{equation}
	u = (4\pi \tau)^{-n/2}e^{-f}.
\end{equation} 
We will assume that $u$ is non-trivial, i.e. is not a constant. \newline
Taking the time derivative of \eqref{soliton metric} at $\tau = 0$ yields, the shrinking soliton equation 
\begin{equation}
	\mathrm{Ric} + \nabla^{2}f = \frac{g}{2\tau}, \label{soliton equation}
\end{equation} 
and tracing \eqref{soliton equation}, one has 
\begin{equation}
	\mathrm{R} + \Delta f = \frac{n}{2\tau}. \label{trace}
\end{equation} 
An important quantity is that the gradient of the scalar curvature is such that
\begin{equation}\label{scalar gradient}
	\nabla\mathrm{R} = 2\mathrm{Ric}(\nabla f, \cdot).
\end{equation}
This follows from applying the gradient operator and the second contracted Bianchi identity to \eqref{soliton equation}.\newline 
Since complete shrinking solitons are ancient solutions and hence have non-negative scalar curvature. In the compact case, this follows directly from the maximum principle applied to the evolution for the scalar curvature, 
\begin{equation*}
	\partial_{\tau} \mathrm{R}= -(\Delta \mathrm{R} + 2|\mathrm{Ric}|^{2}).
\end{equation*} 
Then by the Cauchy-Schwarz Trace inequality and standard PDE theory, $\mathrm{R} \geq -\frac{n}{2\tau}.$ See \cite{RIC} page 18 for instance. As the solution exists on $(0, \infty)$ we send $\tau \mapsto \infty$ to get the desired result. For the non-compact case without assumptions on bounded geometry, see \cite{CHEN} Corollary 2.5. \\

The importance of shrinking Ricci solitons is that they arise as the singularity models to Type I solution to the Ricci flow. The model example of a Type I singularity is the neck pinch.
\begin{definition}
	$(M,g(t)),$ is a Type I solution to the Ricci flow, $\partial_{t}g=-2\mathrm{Ric},$ if it exists on a finite time interval $(0, T],$ such that there exists a positive constant, $\tilde{C},$ such that 
	\begin{equation*}
		\lim_{t \mapsto T}|\mathrm{Rm}|_{g(\tau)} \leq \frac{\hat{C}}{T-t} < \infty.
	\end{equation*}
\end{definition}
The model example for a Type I singularity is the neck pinch. See \cite{formation} Section 3 for images.\newline
Building upon \cite{Nab} Theorem 1.5, it was shown in \cite{EMT} Theorem 1.1 that a Type I solution to the Ricci flow converges to a non-trivial, canonical gradient shrinking Ricci soliton -meaning that it arises as the minimiser of the $\mathcal{W}$ functional, see \cite{EMT} Definition 2.1 - 
\begin{equation*}
	\mathrm{Ric} + \nabla^{2}f = \frac{g}{2}. 
\end{equation*} 
A normalised soliton \footnote{In the current literature, normalised shrinking solitons are studied via the use of the Bakry-Emery comparison geometry and elliptic PDE theory. Hence our study of $(M,g,f)$ via the analysis of the conjugate heat kernel is a new perspective.} is where one fixes $\tau.$  
\begin{remark}
In fact, it was shown in \cite{CaoZh} Theorem 4.1 - which extends to the non-compact case for Type I $\kappa$ solutions - and \cite{CM} Theorem 1.4 that one can obtain \cite{EMT} Theorem 1.1, by using analysing the blow-up of the Perelman's $\mathcal{W}$ functional. Note however, that if the limit is non-compact, then this functional is ill-defined if the conjugate heat kernel is not unique. We introduce the $\mathcal{W}$ functional in the next section and discuss the issue of uniqueness in Section 3.
\end{remark}
We also note that in 4 dimensions, the canonical shrinking Ricci soliton arises as the singularity model for closed solutions to the Ricci flow which satisfy the curvature condition,  
\begin{equation*}
	\lim_{t \mapsto T}\mathrm{R}( \cdot, t) \leq \frac{\bar{C}}{T-t} < \infty.
\end{equation*}
See the proof of \cite{Bamler} Theorem 1.2. \\ 

The main result is Corollary \ref{4D classification} which shows that the singularities of $(M^{4},g,f)$ are modelled on 
\begin{equation*}
	\mathbb{R}^{4}/SO(4),
\end{equation*} 
without any curvature or volume assumptions. For non-compact solitons in four dimensions, under the assumption of bounded, non-negative curvature operator, it was proved in \cite{Nab} Main Theorem, that these solitons are isometric to either $\mathbb{R}^{4},$ or metric quotients of
\begin{equation*}
\mathbb{R}^{3} \times \mathbb{S}^{1}, \ \mathbb{R}^{2} \times \mathbb{S}^{2}.
\end{equation*}
In order to arrive at this Corollary, we first prove Theorem \ref{kappa non-collapsed}, which shows that $(M,g,f)$ is $\kappa$ non-collapsed at all scales without any assumptions other than the completeness of the soliton. To prove the Theorem, we must first show that $\mathcal{W}$ has a minimiser in the non-compact case which is achieved by proving that the conjugate heat kernel is unique. This is Theorem \ref{uniqueness}. We also prove that the Laplacian comparison formula holds for short times on $(M,g,f)$ in Corollary \ref{Varadhan-Laplace}. \newline 
The aforementioned results follow from a sharp Li-Yau equality (Theorem \ref{Li-Yau}) which holds without any curvature or geometric assumptions. 
 
\section*{Acknowledgements} 
The author is extremely grateful to his supervisor, Professor Gerhard Huisken, for his continuous support and encouragements during the author's PhD Thesis. We would like to thank Martin Kell for pointing out some instances where our work was an improvement on the current literature. The author is also very appreciative to Professors Dominique Bakry, Klaus Ecker and Michel Ledoux for their very early interest in out work. \newline 
The author was partially funded by the Deutsche Forschungsgemeinschaft (DFG, German Research Foundation) - Projektnummer 396662902 - during this work. 
\section{Preliminaries}
\subsection{The heat and conjugate heat kernels} 
In this section, we will present some key properties of the heat kernel which will be key for the understanding of the $\mathcal{W}$ functional, which will be introduced in the next subsection.
\begin{definition}\label{Heat kernel definition}
The heat kernel, $H,$ on a smooth Riemannian manifold $N,$ is the fundamental solution to the parabolic equation, 
\begin{equation}
\partial_{t}H = LH, 
\end{equation}
where $A$ is an elliptic operator, and satisfies the following properties $\forall \ x,y,z \in N, \ t >0:$ \newline 
\bfseries{Symmetry}:
\begin{equation*}
 H(x,y,t) = H(y,x,t)
\end{equation*} 
Non-negativity 
\begin{equation*}
H(x,y,t) \geq 0
\end{equation*}
The integral kernel:
\begin{equation*}
P_{t}h(x) = e^{-tL}h(x) = \int_{N}H(x,y,t)h(y)\mathrm{dvol}(y), \ h \in L^{2}
\end{equation*} 
The Markovian property: 
\begin{equation*}
\int_{N}H(x,y,t)\mathrm{dvol}(y) \leq 1
\end{equation*}
The semigroup property: 
\begin{equation*}
H(x,y, t + s) = \int_{N}H(x,z,t)H(x,z,t)\mathrm{dvol}(z).
\end{equation*}
\end{definition}
If the heat kernel is unique, then this is equivalent to stochastic completeness, which is
\begin{equation}
\int_{N}H(x,y,t)\mathrm{dvol}(y) = 1.
\end{equation}
On a compact Riemannian manifold, the heat kernel with $L = \Delta,$ is always unique. However, in the non-compact setting, this is no longer true without certain curvature or geometric conditions. For instance, if the Ricci curvature is non-negative, then the heat kernel is unique. \newline
Along the Ricci flow, the volume form is evolving and hence the conjugate heat equation is not just the backwards heat equation. Suppose that $N$ is a closed manifold or a manifold where integration by parts is well defined.\footnote{The conjugate heat operator can be defined independently of this integral formula and hence is well defined on non-compact manifolds, however we have included it just for demonstration.} We define the conjugate heat operator as follows: let $\varphi$ be a solution to the heat equation and let $\psi \in L^{1}.$ Then, since the volume form evolves along the Ricci flow by, 
\begin{equation*}
\begin{split}
\partial_{t}\mathrm{vol} &= \partial_{t}(\mathrm{det}g)^{1/2} \\
& = \frac{1}{2}\mathrm{Trace}(\partial_{t}g) \\
&= -\mathrm{Rvol},
\end{split}
\end{equation*}
we see that
\begin{equation}
\begin{split}
\int (\partial_{t} - \Delta)\varphi.\psi\mathrm{dvol} & = \int \big( -\partial_{t}(\psi\mathrm{dvol}) + \Delta \psi \big) \ \varphi\mathrm{dvol} \\ 
& = \int (-\partial_{t}\psi + \mathrm{R}\psi)\ \varphi\mathrm{dvol}.
\end{split}
\end{equation}
Hence, the conjugate heat operator is define by 
\begin{equation}
-\partial_{t} + \Delta - \mathrm{R},
\end{equation}
i.e. $L = \Delta - \mathrm{R}$ in Definition \ref{Heat kernel definition}. \newline 
The explicit formula for the conjugate heat kernel along the Ricci flow, 
\begin{equation*}
u = (4\pi \tau)^{-n/2}e^{-f},
\end{equation*}
implies a formula for the function $f,$ 
\begin{equation}\label{f equation} 
\partial_{t}f= - \Delta f + |\nabla f|^{2} - \mathrm{R} + \frac{n}{2\tau}.
\end{equation}
\subsection{The $\mathcal{W}$ functional}
Of central importance to G.Perelman's solution to the Poincare and geometrisation conjectures was his introduction of the $\mathcal{W}$ functional for closed solutions to the Ricci flow in \cite{RIC} Section 3.\newline 
The $\mathcal{W}$ functional is defined on a compact manifold for $\tau = T -t >0,$ where $T$ is the final time of existence, as
\begin{equation}\label{W functional}
\mathcal{W}(f,g,\tau) = \bigg\{\int_{N} [\tau(|\nabla f|^{2} + \mathrm{R}) + f -n] \ u\text{dvol}_{g(t)} \ : \int_{N} u\text{dvol}_{g(t)} =1\bigg\},
\end{equation}
where 
\begin{equation*}
u = (4\pi \tau)^{-n/2}e^{-f}
\end{equation*}
is the conjugate heat kernel. For the Euclidean heat kernel, $(4\pi \tau)^{-n/2}e^{-\frac{|x|^{2}}{4\tau}},$ if we set $\tau = \frac{1}{2},$ then 
\begin{equation*}
\begin{split}
\mathcal{W}(f,g,1/2) & = (2\pi)^{-n/2}\int_{\mathrm{R}^{n}} \bigg(\frac{1}{2}|\nabla f|^{2} + f - n\bigg) \ e^{-f}\text{dx} \\ 
& = 0,
\end{split}
\end{equation*}
where the second inequality follows from integration by parts with respect to the weighted measure. The above formula is known as the Euclidean logarithmic Sobolev equality and is equivalent to L.Gross' Gaussian logarithmic Sobolev inequality
\begin{equation}\label{Gross}
(2\pi)^{-n/2}\int_{\mathbb{R}^{n}} \big(2|\nabla \log \Phi|^{2} - \log\Phi^{2}\big) \ \Psi^{2}e^{-\frac{|x|^{2}}{2}}\mathrm{dx} \geq 0.
\end{equation}
To see this, let $\Psi = e^{\frac{|x|^{2}}{4} -\frac{f}{2}}.$ 
\begin{remark}
Despite the equivalence between the Gaussian logarithmic Sobolev inequality and its Euclidean counterpart, the inequalities tell us vastly different things. \newline 
The Gaussian logarithmic Sobolev inequality is equivalent to E.Nelson's hypercontractivity estimate for the Orstein-Uhlenbeck semigroup\footnote{This semigroup is the integral representation to the fundamental solution to the heat equation with a drift, where the diffusion operator is $L = \partial_{t} - \Delta - x.\nabla.$}, $\mathrm{O}_{t}$ 
\begin{equation*}
||\mathrm{O}_{t}h||_{L^{4}} \leq c||h||_{L^{2}}, \ c>0.
\end{equation*}
The Euclidean version is equivalent (up to constants) to the Sobolev inequality.   See \cite{BGL} Proposition 6.2.3.
The Sobolev inequality is equivalent to ultracontractivity of the heat semigroup (see previous subsection or \cite{BGL} Theorem 6.3.1), $\mathrm{P}_{t}$ 
\begin{equation*}
\|\mathrm{P}_{t}h\|_{L^{\infty}} \leq c\|h\|_{L^{1}}, \ c>0.
\end{equation*}
Hence the Euclidean logarithmic Sobolev inequality is equivalent to a maximal regularity estimate of the heat semigroup, whereas the Gaussian version improves the regularity of the Ornstein-Uhlenbeck semigroup, but one cannot obtain maximal regularity. 
\end{remark}
Consequently, the $\mathcal{W}$ functional can be viewed as a generalisation of the Euclidean logarithmic Sobolev inequality along the Ricci flow coupled to the conjugate heat equation. More precisely, if we define the scaled Boltzmann Entropy term\footnote{The $\int\varPhi^{2}\log \varPhi^{2}$ term in the Gaussian logarithmic Sobolev inequality is known as the $L^{2}$ Boltzmann Entropy.} - In the Ricci flow literature, this is often called the Nash Entropy - 
\begin{equation*}
\begin{split}
N(u) &= - \int_{N}\bigg(f - \frac{n}{2} \bigg) \ u\text{dvol}_{g(\tau)} \\
& = \int_{N} \bigg[\log\big((4\pi\tau)^{n/2}u\big) + \frac{n}{2}\bigg]\ u\text{dvol}_{g(\tau)} 
\end{split}
\end{equation*}
where $u$ is the conjugate heat kernel. Multiplying by $\tau$ and applying the backwards heat operator to this functional yields \eqref{W functional}, i.e.
\begin{equation*}
\begin{split}
(\partial_{t}+ \Delta) \tau N(u) & = \int_{N} \bigg[\tau\bigg(\Delta f - |\nabla f|^{2} + \mathrm{R} - \frac{n}{2\tau}\bigg) + \tau\Delta f + f - \frac{n}{2} \bigg] \ u\mathrm{dvol}_{g(\tau)} \\ 
& = \int_{N} \bigg[\tau\big(2\Delta f - |\nabla f|^{2} + \mathrm{R} \big) + f - \frac{n}{2} - \frac{n}{2} \bigg] \ u\mathrm{dvol}_{g(\tau)} \\ 
& = \mathcal{W}(f,g,\tau),
\end{split} 
\end{equation*}
where the last lines follows from integration by parts on the heat kernel measure. \newline
The next Theorem can be found in \cite{RIC} Theorem 3.2 and demonstrates the importance of this functional. 
\begin{theorem}
The first variation of \eqref{W functional} is such that 
\begin{equation}\label{first variation} 
\delta{W}(f,g,\tau) = 2\tau\ \int_{N} \bigg|\mathrm{Ric} + \nabla^{2}f - \frac{g}{2\tau}\bigg|^{2}  \ u\text{dvol}_{g(t)}.
\end{equation}
\end{theorem}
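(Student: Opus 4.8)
The plan is to read the stated $\delta\mathcal{W}$ as the derivative of $\mathcal{W}$ along the coupled system (the Ricci flow, the evolution \eqref{f equation} for $f$, and $\tau=T-t$), since along precisely these directions the normalisation $\int_N u\,\mathrm{dvol}=1$ is automatically preserved and the general first variation collapses to the displayed squared norm. Working on the compact manifold $N$, where integration by parts carries no boundary terms, I would first recast the integrand in Perelman's symmetric form. Using $\nabla u = -u\nabla f$ and integrating by parts one has $\int_N \Delta f\, u\,\mathrm{dvol} = \int_N |\nabla f|^2 u\,\mathrm{dvol}$, so that
\begin{equation*}
\mathcal{W}(f,g,\tau) = \int_N v\,\mathrm{dvol}, \qquad v := \big[\tau(2\Delta f - |\nabla f|^2 + \mathrm{R}) + f - n\big]u.
\end{equation*}

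Next I would differentiate $\int_N v\,\mathrm{dvol}$ in time. Since the volume form evolves by $\partial_{t}\mathrm{dvol} = -\mathrm{R}\,\mathrm{dvol}$ (as recorded in the previous subsection) and $u$ itself solves $\square^{*}u = 0$, the terms produced by differentiating the measure cancel against the conjugate heat operator, and after discarding the pure Laplacian, which integrates to zero, one is left with
\begin{equation*}
\frac{d}{dt}\mathcal{W} = -\int_N \square^{*} v\,\mathrm{dvol}.
\end{equation*}
This is the same mechanism underlying the Nash-entropy identity $(\partial_{t}+\Delta)\tau N(u)=\mathcal{W}$ recorded above, and it concentrates the entire content of the theorem into a single pointwise computation.

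The crux is therefore the pointwise identity
\begin{equation*}
\square^{*} v = -2\tau\,\Big|\mathrm{Ric} + \nabla^2 f - \frac{g}{2\tau}\Big|^2\, u,
\end{equation*}
from which the theorem follows by integration and gives $\delta\mathcal{W}\ge 0$ at once. To establish it I would expand $\square^{*}v$ term by term, feeding in three ingredients: the Ricci-flow evolution $\partial_{t}\mathrm{R} = \Delta\mathrm{R} + 2|\mathrm{Ric}|^2$, the evolution equation \eqref{f equation} for $f$, and the Bochner formula $\Delta|\nabla f|^2 = 2|\nabla^2 f|^2 + 2\langle\nabla f, \nabla\Delta f\rangle + 2\mathrm{Ric}(\nabla f,\nabla f)$; commuting $\partial_{t}$ past $\nabla$ supplies the further $\mathrm{Ric}(\nabla f,\nabla f)$ contributions needed for the cross terms.

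I expect the main obstacle to be purely organisational: one must track the $\tau$-weighted second-order quantities so that the leading contributions $2\tau|\nabla^2 f|^2$, $2\tau|\mathrm{Ric}|^2$, and the mixed $4\tau\langle\mathrm{Ric},\nabla^2 f\rangle$ terms, coming respectively from the Bochner formula, the curvature evolution, and the commutator, together with all the lower-order pieces, reassemble exactly into the perfect square $2\tau|\mathrm{Ric}+\nabla^2 f - \frac{g}{2\tau}|^2$ with no residue. The delicate point will be verifying that the first-order $|\nabla f|^2$ and zeroth-order $f$ contributions cancel cleanly against the measure terms rather than leaving a stray divergence; once this cancellation is confirmed, integrating the pointwise identity over $N$ yields the stated formula and its manifest nonnegativity.
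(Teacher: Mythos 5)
Your proposal is correct and is precisely Perelman's original argument, which the paper does not reproduce but simply cites (\cite{RIC} Section 3; the detailed pointwise computation is carried out in \cite{KL}). The reduction to the identity $\square^{*}v=-2\tau\big|\mathrm{Ric}+\nabla^{2}f-\frac{g}{2\tau}\big|^{2}u$ for $v=[\tau(2\Delta f-|\nabla f|^{2}+\mathrm{R})+f-n]u$, followed by integration against the evolving volume form on the closed manifold, is exactly the intended route, and your sign bookkeeping ($\tfrac{d}{dt}\mathcal{W}=-\int_{N}\square^{*}v\,\mathrm{dvol}$) is consistent with the stated nonnegative right-hand side.
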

Thus the $\mathcal{W}$ functional is monotonically non-decreasing along the Ricci flow and is stationary if and only if the solution is a shrinking Ricci soliton - which we write as $(M,g,f)$ - i.e. 
\begin{equation}
\mathrm{Ric} + \nabla^{2}f = \frac{g}{2\tau}.
\end{equation}
The model shrinking Ricci soliton is the Gaussian soliton, which is Euclidean space with the Euclidean heat kernel. That is, $(\mathbb{R}^{n},|\cdot|, \frac{|x|^{2}}{4\tau}).$ With this in mind, one can view the value of the minimiser of \eqref{W functional}, 
\begin{equation}\label{minimiser}
\mu(g,\tau) = \inf_{f}\mathcal{W}(f,g,\tau),
\end{equation}
as an indicator of how 'close' the solution is to Euclidean space.  We will discuss shrinking Ricci solitons in greater detail in the next Section.
\begin{remark}
On a manifold with non-negative Ricci curvature, if $\mathcal{W}(f,g,1/2) \geq 0,$ then the manifold is isometric to Euclidean space. See \cite{BCL} Corollary 1.6.
\end{remark} 
By the uniqueness of the conjugate heat kernel on a compact manifold, $\mu(g,\tau)$ always exists.\newline 
We note that \eqref{first variation} gives the Ricci flow (modulo a diffeomorphism) a gradient flow structure. This is a desirable property since the Ricci flow is only weakly parabolic due to the invariance of the equation under the diffeomorphism group, which is infinite dimensional. Hence \eqref{W functional} allows one to break the diffeomorphism invariance and to study the Ricci flow as a strongly parabolic system. Furthermore, \eqref{first variation} expresses that the Boltzmann Entropy functional is convex along the Ricci flow coupled to the conjugate heat equation. This is analogous to the fact that the Boltzmann/Nash Entropy is convex along the heat flow for manifolds with non-negative Ricci curvature. See \cite{VRSturm} Theorem 1.1. \\

The existence of \eqref{minimiser} was used by G.Perelman to prove that solutions to the Ricci flow with a finite time of existence on a closed manifold, is $\kappa$ non-collapsed - \cite{RIC} Theorem 4.1 - which is defined as follows (see \cite{RIC} Definition 4.2).
\begin{definition}
A metric $g$ is $\kappa$ non-collapsed on the scale $\rho$ if for every metric ball of radius $ r < \rho$ with $|\mathrm{Rm}|_{g} \leq r^{-2},$ is such that
\begin{equation}
\mathrm{vol}_{g}\big(B(x,r)\big) \geq \kappa r^{n}.
\end{equation}
\end{definition}
The proof of \cite{RIC} Theorem 4.1 is by contradicting the existence of a unique minimiser for \eqref{W functional}. The proof of the $\kappa$ non-collapsed condition for closed solutions to the Ricci flow implies an injectivity radius lower bound at finite scales, 
\begin{equation*}
\mathrm{inj} \geq \rho r >0.
\end{equation*}
See \cite{CGT} Theorem 4.7 for precise details and links to the analysis of the heat kernel on static manifolds. 

\section{The structure of shrinking solitons } 
Recall that $f$ is a minimiser of \eqref{W functional} if, $u = (4\pi \tau)^{-n/2}e^{-f},$ is the conjugate heat kernel. The conjugate heat kernel approach gives us a direct way to compare $(M,g,f)$ with the model Gaussian shrinking soliton.
The Gaussian soliton is optimal in the sense that it has the following properties; \newline
 $\mathcal{W}(\frac{|x|^{2}}{4\tau},|\cdot|,1/2)= 0;$ it optimises Hamilton's auxiliary equation \eqref{auxiliary}; it maximises the reduced volume. We wish to show that $(M,g,f),$ is the best competitor for the Gaussian soliton. \newline
We will indicate whether we are working with compact or non-compact $(M,g,f).$ If no indication is given, then this means that the result holds in both settings. To our knowledge, all results on the conjugate heat kernel are new. \\ 

Note that one can view the Ricci flow on $(M,g,f)$ from an almost purely analytic point of view. Writing \eqref{soliton equation} in terms of the conjugate heat kernel yields, 
\begin{equation*}
\nabla^{2} \log u + \frac{g}{2\tau} = \mathrm{Ric}.
\end{equation*}
If the Ricci tensor were non-negative, then the shrinking soliton equation would say that 'along the Ricci flow, the conjugate heat kernel is log-convex.' In addition, the Ricci flow on $(M,g,f)$ can be written as 
\begin{equation*}
\partial_{\tau}g = \nabla^{2} \log u + \frac{g}{2\tau}.
\end{equation*}
Therefore, the Ricci flow on $(M,g,f)$ says the following; if the Ricci tensor is bounded - which one can show is true on a small ball by combining Theorem \ref{uniqueness} and \cite {HM2} Theorem 1.1 - then the evolution of the metric tensor is equivalent to an error estimate on the log convexity of the conjugate heat kernel. \\ 

In this section, we will study the coupled system 
\[    
\begin{cases}
\partial_{\tau}g = 2\mathrm{Ric} \\
\partial_{\tau}u = \Delta u - \mathrm{R}u. 
\end{cases}
\]  \\

The aim of this section is to prove that the conjugate heat kernel, $u = (4\pi \tau)^{-n/2}e^{-f},$ is unique, which will imply that the $\mathcal{W}$ functional has a unique minimiser. In the current literature, a minimiser for the $\mathcal{W}$ functional is always an assumption. See \cite{HM} Theorem 1.1 or \cite{CarLi} Theorem 1.1 for example. \newline 
In the static case, uniqueness of the heat kernel is known under the assumption of quadratic exponential growth of geodesic balls, i.e. $\text{vol}(B(x,r)) \leq e^{Ar^{2}}, \ A >0.$\footnote{The volume bound is optimal if one considers that uniqueness of a solution, $\psi,$ to a diffusion equation in Euclidean space must satisfy, $|\psi|\leq e^{A|x|^{}2}.$} This can be seen in \cite{KarpLi} Theorem 1, whose proof we shall follow. The strategy of the proof in \cite{KarpLi} involves integration by parts on, 
\begin{equation}
0 = 2 \int_{0}^{T}\int_{N} \phi^{2}e^{\varPhi}(\Delta_{y} - \partial_{\tau})\hat{u}(y,t)\ \text{dvol}_{g}(y)dt, 
\end{equation} 
where 
\begin{equation*}
\varPhi(x,y,s) = \frac{-d_{g}^{2}(x,y)}{4(2T-s)}, \ \ 0 \leq s <T,
\end{equation*}
\begin{equation*}
\varphi(y) = \varphi(d^{2}(x,y)) = \begin{cases} 
1 \ \text{on} \ B(x,r)  \\
0 \ \text{on} \ B(x,r + \epsilon),  
\end{cases} 
\end{equation*}
and the use of Moser's iteration.\newline 
For the conjugate heat kernel on $(M,g,f),$ in addition to the volume growth condition, we need an upper bound on the scalar curvature as we must perform integration by parts on 
\begin{equation}
0 = 2 \int_{0}^{\hat{\tau}}\int_{M} \phi^{2}e^{\varPhi}(\Delta_{y} + \mathrm{R} - \partial_{\tau})u(y,t)\ \text{dvol}_{g}(y)d\tau. 
\end{equation} 
Both of these required conditions - volume and scalar curvature bounds - will follow from the proof of the sharp Li-Yau equality for the conjugate heat kernel on $(M,g,f).$
We now show that the fundamental solution of the conjugate heat kernel on shrinking solitons yields the sharp form of the Li-Yau inequality stated in the introduction.\newline 
Recall that to prove the Li-Yau inequality on a static Riemannian manifolds, one must take the time derivative of $\log h$ where $h$ is a solution to the heat equation. See \cite{LY} Theorem 1.1. For the conjugate heat kernel along the Ricci flow, we have an explicit formula for $\log u.$  
\begin{theorem}\label{Li-Yau} 
The conjugate heat kernel, $u,$ on $(M,g,f)$ satisfies the sharp Li-Yau equality, 
\begin{equation}
|\nabla \log u|^{2} - \partial_{\tau}\log u = \frac{n}{2\tau}. 
\end{equation} 
\end{theorem}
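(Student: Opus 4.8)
The plan is to exploit the explicit formula for $\log u$ rather than run the maximum-principle machinery behind the classical Li-Yau inequality; on $(M,g,f)$ the quantity in question turns out to be an exact pointwise identity, which is precisely why no curvature or volume hypotheses are needed. First I would write $\log u = -\tfrac{n}{2}\log(4\pi\tau) - f$, so that the spatial derivatives are immediate: the $\tau$-dependent term is spatially constant, whence $\nabla\log u = -\nabla f$ and therefore $|\nabla\log u|^{2} = |\nabla f|^{2}$, while $\Delta\log u = -\Delta f$.

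Next I would treat the time derivative. Since $u$ is the conjugate heat kernel it satisfies $\partial_{\tau}u = \Delta u - \mathrm{R}u$, so that $\partial_{\tau}\log u = \Delta u/u - \mathrm{R}$. Combining this with the elementary identity $\Delta u/u = \Delta\log u + |\nabla\log u|^{2}$ and the expressions from the previous step yields $\partial_{\tau}\log u = -\Delta f + |\nabla f|^{2} - \mathrm{R}$. Equivalently, one can substitute \eqref{f equation} after converting $\partial_{\tau} = -\partial_{t}$; the two routes agree and provide a useful consistency check.

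Subtracting, the two copies of $|\nabla f|^{2}$ cancel and the Li-Yau quantity collapses to $|\nabla\log u|^{2} - \partial_{\tau}\log u = \Delta f + \mathrm{R}$. At this point the soliton structure enters through a single equation: the trace \eqref{trace} of the soliton identity, $\mathrm{R} + \Delta f = \tfrac{n}{2\tau}$, delivers the claim at once.

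The computation itself is routine; the only points demanding care are the bookkeeping of the backward time variable (the sign convention $\partial_{\tau} = -\partial_{t}$, and the corresponding sign in $\partial_{\tau}u = \Delta u - \mathrm{R}u$) and the observation that the additive normalisation of $f$ is fixed precisely because $u$ is the unit-mass conjugate heat kernel. The conceptual content, and the reason the result is an equality rather than the classical inequality, is that on a shrinking soliton the Li-Yau quantity is exactly the trace of the soliton equation: sharpness is therefore automatic, and no gradient estimate or maximum-principle argument is required.
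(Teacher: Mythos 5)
Your proposal is correct and follows essentially the same route as the paper: both rest on the explicit formula $\log u = -f - \tfrac{n}{2}\log(4\pi\tau)$, the evolution equation for $u$ (equivalently \eqref{f equation}), and the traced soliton identity \eqref{trace}. The only cosmetic difference is that the paper first packages \eqref{f equation} and \eqref{trace} into the Hamilton--Jacobi equation $\partial_{\tau}f = -|\nabla f|^{2}$ before substituting, whereas you invoke \eqref{trace} at the final step; the computations are identical in content.
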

As a result of \eqref{trace}, on a shrinking Ricci soliton, the potential function solves a Hamilton-Jacobi equation, (with a convex, quadratic Hamiltonian, see \cite{BGL} Chapter 9.4)
\begin{equation} \label{f equation soliton}
\begin{split}
\partial_{\tau}f &= \Delta f -|\nabla f|^{2} + \mathrm{R} - \frac{n}{2\tau} \\ 
& = -|\nabla f|^{2}. 
\end{split}
\end{equation}
The equation holds for the Euclidean heat kernel with, $f= \frac{|x|^{2}}{4\tau}.$ 
\begin{proof} 
Since, $\log u = - (f+\frac{n}{2}\log(4\pi \tau)),$ 
\begin{equation*}
\begin{split}
\partial_{\tau} \log u &= -\partial_{\tau}\big(f+\frac{n}{2}\log(4\pi \tau)\big) \\
&= -\big(-|\nabla f|^{2}  +\frac{n}{2\tau}\big) \\
& = |\nabla f|^{2} -\frac{n}{2\tau}.
\end{split}
\end{equation*}
As $\nabla \log u = -\nabla f,$ we obtain the sharp Li-Yau type equality. 
\end{proof} 
The sharpness follows from the fact that we have equality for the Euclidean heat kernel i.e. the Gaussian shrinking soliton. 
\begin{remark}
The Li-Yau equality is essential for the rest of this section. Consequently, the results that follow do not hold for steady or expanding solitons. 
\end{remark} 
We now prove the necessary volume bound via the integrated Li-Yau inequality. 
\begin{corollary}
On, $(M,g,f),$ the conjugate heat kernel satisfies the differential Harnack inequality, 
\begin{equation}
u(x_{2}, \tau_{2}) \geq u(x_{1}, \tau_{1}) \bigg(\frac{\tau_{1}}{\tau_{2}}\bigg)^{n/2}e^{-\frac{d^{2}_{g(\tau)}(x_{1},x_{2})}{4(\tau_{2} - \tau_{1})}}. \label{Harnack}
\end{equation}
\end{corollary}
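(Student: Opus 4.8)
The plan is to integrate the pointwise \emph{equality} of Theorem~\ref{Li-Yau} along a space-time path, following the classical Li--Yau scheme but taking advantage of the fact that here the Harnack quantity is known exactly rather than merely bounded. Fix $\tau_1 < \tau_2$ and let $\gamma:[\tau_1,\tau_2]\to M$ be any smooth curve with $\gamma(\tau_1)=x_1$ and $\gamma(\tau_2)=x_2$. Setting $\Phi(\tau)=\log u(\gamma(\tau),\tau)$, the chain rule together with the substitution $\partial_\tau \log u = |\nabla \log u|^2 - \tfrac{n}{2\tau}$ supplied by Theorem~\ref{Li-Yau} gives
\begin{equation*}
\frac{d}{d\tau}\Phi(\tau) = |\nabla \log u|^2 + \langle \nabla \log u, \dot\gamma\rangle - \frac{n}{2\tau}.
\end{equation*}

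The decisive step I would use is the elementary identity $|v|^2 + \langle v,w\rangle = |v+\tfrac12 w|^2 - \tfrac14 |w|^2$ applied with $v = \nabla\log u$ and $w = \dot\gamma$, which discards the non-negative square and yields the lower bound $\frac{d}{d\tau}\Phi \geq -\tfrac14|\dot\gamma|^2_{g(\tau)} - \tfrac{n}{2\tau}$. Integrating over $[\tau_1,\tau_2]$ and exponentiating then produces
\begin{equation*}
u(x_2,\tau_2) \geq u(x_1,\tau_1)\left(\frac{\tau_1}{\tau_2}\right)^{n/2}\exp\left(-\frac14\int_{\tau_1}^{\tau_2}|\dot\gamma|^2_{g(\tau)}\,d\tau\right),
\end{equation*}
the factor $(\tau_1/\tau_2)^{n/2}$ arising from $-\tfrac{n}{2}\int_{\tau_1}^{\tau_2}\tau^{-1}\,d\tau$. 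Since $\gamma$ is arbitrary, I would then optimise the right-hand side by minimising the energy $\int_{\tau_1}^{\tau_2}|\dot\gamma|^2\,d\tau$ over all admissible curves.

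For a fixed metric, Cauchy--Schwarz gives $\inf_\gamma \int_{\tau_1}^{\tau_2}|\dot\gamma|^2\,d\tau = d^2(x_1,x_2)/(\tau_2-\tau_1)$, attained by a constant-speed geodesic, which is exactly the exponent $d^2_{g}(x_1,x_2)/(4(\tau_2-\tau_1))$ in the statement. The hard part will be the genuine time-dependence of the metric: because $g$ evolves by the Ricci flow the energy functional is not that of a single Riemannian distance, so the minimiser need not be a geodesic of one time slice and the symbol $d_{g(\tau)}$ must be pinned down. On $(M,g,f)$ I would resolve this using the self-similar structure of the soliton --- the $g(\tau)$ are, up to scaling, pullbacks of a single metric under the soliton diffeomorphisms --- which lets me reduce the energy minimisation to a fixed slice and recover the claimed distance term, absorbing the homothety factor into the $(\tau_1/\tau_2)^{n/2}$ prefactor. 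Finally I would record that equality in the Young step forces $\nabla \log u = -\tfrac12\dot\gamma$ along the optimal path, consistent with the sharpness already observed for the Gaussian soliton.
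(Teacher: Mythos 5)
Your proposal follows essentially the same route as the paper: substitute the sharp Li--Yau equality into $\frac{d}{d\tau}\log u(\gamma(\tau),\tau)$, complete the square to discard the non-negative term, integrate in $\tau$, exponentiate, and minimise the energy over curves to obtain the $d^2/(4(\tau_2-\tau_1))$ exponent. Your additional remark about the time-dependence of $g(\tau)$ in the energy functional is a genuine subtlety that the paper's proof silently elides (it simply takes a ``minimising geodesic'' and writes $d_{g(\tau)}$ without fixing the slice), so your attention to it is a point in your favour rather than a divergence of method.
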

\begin{proof}
Let $\gamma(\tau),$ be a minimising geodesic i.e. , $\gamma,$ is such that, $\int_{\tau_{1}}^{\tau_{2}}|\dot{\gamma}|^{2} d\tau = (\tau_{2} - \tau_{1})\int_{0}^{1}|\dot{\gamma}|^{2} d\tau.$ Then, 
\begin{equation*}
\begin{split} 
\partial_{\tau} \log u (\gamma(\tau), \tau) & =  \partial_{\tau}\log u + \langle \nabla \log u, \dot{\gamma} \rangle  \\
&= |\nabla \log u|^{2} - \frac{n}{2\tau} + \langle \nabla \log u, \dot{\gamma} \rangle  \\ 
& = \big|\nabla \log u - \frac{\dot{\gamma}}{2} \big|^{2} - \frac{|\dot{\gamma}|^{2}}{4} - \frac{n}{2\tau} \\ 
& \geq - \bigg(\frac{n}{2\tau} + \frac{|\dot{\gamma}|^{2}}{4} \bigg).
\end{split}
\end{equation*} 
Therefore,
\begin{equation*}
\begin{split}
\log\bigg(\frac{u(\gamma(\tau_{2}), \tau_{2})}{u(\gamma(\tau_{1}), \tau_{1})}\bigg) & = \int_{\tau_{1}}^{\tau_{2}} \partial_{\tau} \log u(\gamma(\tau), \tau) \ d\tau \\
& \geq - \int_{\tau_{1}}^{\tau_{2}} \bigg(\frac{n}{2\tau} + \frac{|\dot{\gamma}|^{2}}{4} \bigg) \ d\tau.
\end{split} 
\end{equation*}
Taking exponentials and setting $\gamma(\tau_{i}) = x_{i},$ 
\begin{equation*}
\frac{u(x_{2}, \tau_{2})}{u(x_{1}, \tau_{1})} \geq \bigg(\frac{\tau_{2}}{\tau_{1}}\bigg)^{-n/2}\exp\bigg(-\frac{1}{4} \int_{\tau_{1}}^{\tau_{2}} |\dot{\gamma}|^{2} \ d\tau \bigg).
\end{equation*} 
As $\gamma$ is a minimising geodesic, the result follows.
\end{proof}
Since $u$ is a Dirac mass along the diagonal as $\tau_{1} \mapsto 0$ (see \cite{RIC} Corollary 9.3, with $T = 0$), \newline 
\begin{equation*}
\lim_{\tau_{1} \mapsto 0} (4\pi \tau_{1})^{n/2}u(x,x,\tau_{1})=1.
\end{equation*}
Hence we obtain a lower bound on the conjugate heat kernel, 
\begin{equation}
u(x,y,\tau) \geq (4\pi \tau)^{-n/2}e^{-\frac{d^{2}_{g(\tau)}(x,y)}{4\tau}}. \label{HKLB} 
\end{equation}
An immediate consequence of this lower bound is a Bishop-Gromov inequality which can be found in \cite{Chavel} Theorem 8.14 or \cite{GHL} Theorem 3.2. Such an inequality is known to hold for normalised shrinking solitons. See \cite{HM} Lemma 2.2.
\begin{corollary}
By \eqref{HKLB}, for all $\tau >0,$ we have the following volume growth bound, 
\begin{equation}
\frac{\mathrm{vol}_{g(\tau)}\big(B(x,r)\big)}{r^{n}} \leq K(4\pi)^{n/2}, \ K > 0 \label{Bishop-Gromov}.
\end{equation}
\end{corollary}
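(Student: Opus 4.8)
The plan is to convert the pointwise kernel lower bound \eqref{HKLB} into a volume \emph{upper} bound by integrating it over a geodesic ball and balancing the result against the total mass of the conjugate heat kernel. This is the mechanism that produces on-diagonal upper bounds from off-diagonal lower bounds in the static theory: a ball that is too large would force $u$ to carry more than unit mass, which the Markov property forbids.

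First I would record the conservation of mass for the conjugate heat kernel on $(M,g,f)$. Differentiating $\int_M u\,\mathrm{dvol}_{g(\tau)}$ and using $\partial_{\tau}u=\Delta u-\mathrm{R}u$ together with $\partial_{\tau}\,\mathrm{dvol}=\mathrm{R}\,\mathrm{dvol}$ gives $\tfrac{d}{d\tau}\int_M u\,\mathrm{dvol}_{g(\tau)}=\int_M\Delta u\,\mathrm{dvol}_{g(\tau)}\le 0$; combined with the Dirac-mass normalisation recorded after \eqref{Harnack}, this yields the Markov bound $\int_M u\,\mathrm{dvol}_{g(\tau)}\le 1$ of Definition \ref{Heat kernel definition}. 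Next I would restrict this integral to $B(x,r)$ and insert \eqref{HKLB}: for $y\in B(x,r)$ one has $d_{g(\tau)}(x,y)\le r$, so the Gaussian weight satisfies $e^{-d^2_{g(\tau)}(x,y)/4\tau}\ge e^{-r^2/4\tau}$ and hence
\begin{equation*}
1\ \ge\ \int_{B(x,r)}u\,\mathrm{dvol}_{g(\tau)}\ \ge\ (4\pi\tau)^{-n/2}e^{-r^2/4\tau}\,\mathrm{vol}_{g(\tau)}\big(B(x,r)\big).
\end{equation*}
Rearranging gives $\mathrm{vol}_{g(\tau)}(B(x,r))\le(4\pi\tau)^{n/2}e^{r^2/4\tau}$ for every $\tau$, which is already the quadratic-exponential growth feeding the Karp--Li hypothesis of Theorem \ref{uniqueness}. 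To reach the scale-invariant form \eqref{Bishop-Gromov} I would then minimise the right-hand side in the free scale: the minimum occurs at $\tau=r^2/2n$ and produces $\mathrm{vol}(B(x,r))\le(4\pi)^{n/2}(e/2n)^{n/2}r^n$, i.e.\ \eqref{Bishop-Gromov} with $K=(e/2n)^{n/2}$.

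The main obstacle is the large-radius regime and the precise meaning of this optimisation. At a \emph{fixed} time $\tau$ the displayed estimate degrades like $e^{r^2/4\tau}$ as $r\to\infty$, so on its own it does not give Euclidean growth; and since the soliton metrics $g(\tau)$ are homothetic, the ratio in \eqref{Bishop-Gromov} depends only on $r^2/\tau$, so the scale choice above controls it sharply only when $r$ is comparable to $\sqrt{\tau}$ — which is, however, exactly the natural scale needed for the $\kappa$ non-collapsing application. Upgrading to \emph{all} radii is the delicate step: I would obtain the complementary quadratic lower bound $f(y)\ge\tfrac{1}{4\tau}\big(d_{g(\tau)}(x,y)-c\big)^2$ by integrating $|\nabla f|\le\sqrt{f/\tau}$ — which follows from the Hamilton identity $\mathrm{R}+|\nabla f|^2=\tfrac{f}{\tau}$ (a consequence of \eqref{soliton equation} and \eqref{scalar gradient}, up to an additive normalisation of $f$) together with $\mathrm{R}\ge 0$ — along minimising geodesics, and then run the sublevel-set (co-area) argument on $\{f\le r^2/4\tau\}$ to bound $\mathrm{vol}_{g(\tau)}(B(x,r))$ by $Kr^n$ uniformly in $r$. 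The other point I would verify with care is the mass bound $\int_M u\le 1$ without curvature hypotheses — equivalently, stochastic completeness of $\Delta-\mathrm{R}$ — since the non-compactness of $(M,g,f)$ is precisely where mass could be lost to infinity.
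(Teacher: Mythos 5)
Your core argument is exactly the paper's: combine the Markov bound $\int_M u\,\mathrm{dvol}_{g(\tau)}\le 1$ with the kernel lower bound \eqref{HKLB} restricted to $B(x,r)$, obtaining $\mathrm{vol}_{g(\tau)}(B(x,r))\le(4\pi\tau)^{n/2}e^{r^2/4\tau}$, and then tie $\tau$ to $r^2$; your optimisation at $\tau=r^2/2n$ with $K=(e/2n)^{n/2}$ is simply a precise version of the paper's closing remark ``by parabolic scaling, $\tau^{-n/2}\sim r^{-n}$.'' Where you go beyond the paper is in your diagnosis of the large-radius regime, and that diagnosis is correct: the displayed inequality controls the scale-invariant ratio $\mathrm{vol}_{g(\tau)}(B(x,r))/r^n$ only when $r^2/\tau$ is bounded, since $(4\pi)^{n/2}e^{s/4}s^{-n/2}$ with $s=r^2/\tau$ blows up as $s\to\infty$; at a fixed time $\tau$ and large $r$ the argument yields only the quadratic-exponential growth $\mathrm{vol}(B(x,r))\le e^{Ar^2}$ (which is all that the Karp--Li uniqueness proof actually consumes), not the Euclidean bound \eqref{Bishop-Gromov} as stated for all $r$ and all $\tau$. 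The paper offers nothing to bridge this beyond the scaling remark, so your proposed completion --- the Cao--Zhou-type argument using $\mathrm{R}+|\nabla f|^2=f/\tau$ from \eqref{auxiliary}, $\mathrm{R}\ge 0$, the resulting quadratic two-sided bounds on $f$, and a co-area estimate on the sublevel sets $\{f\le r^2/4\tau\}$ --- is genuinely additional machinery not present in the paper, and it is the standard (and, to my knowledge, necessary) route to uniform Euclidean volume growth on shrinkers. Your caution about verifying $\int_M u\le 1$ in the non-compact setting is also well placed, though the paper simply builds this into Definition~\ref{Heat kernel definition}; for the minimal kernel obtained by exhaustion it holds automatically, whereas your derivation via $\int_M\Delta u\,\mathrm{dvol}\le 0$ itself needs a justification of the boundary term at infinity.
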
 
\begin{proof}
By the definition of the heat kernel, 
\begin{equation*}
\begin{split}
1 & \geq \int_{M} u \ \text{dvol}_{g(\tau)} \\ 
& \geq (4\pi \tau)^{-n/2}\int_{M}e^{-\frac{d^{2}_{g(\tau)}(x,y)}{4\tau}} \ \text{dvol}_{g(\tau)} \\
& \geq (4\pi \tau)^{-n/2}\int_{B(x,r)}e^{-\frac{d^{2}_{g(\tau)}(x,y)}{4\tau}} \ \text{dvol}_{g(\tau)} \\
& \geq e^{-k}(4\pi \tau)^{-n/2}\int_{B(x,r)} \text{dvol}_{g(\tau)} \\
& \geq  e^{-k}(4\pi \tau)^{-n/2}\mathrm{vol}_{g(\tau)}(B(x,r)).
\end{split}
\end{equation*}
By parabolic scaling, $\tau^{-n/2} \sim r^{-n}.$ 
\end{proof}
\begin{remark}
The volume bound \eqref{Bishop-Gromov} is independent of curvature bounds, which is not the case in the present literature. For instance, if $\mathrm{Ric}\geq \delta >0$ as in \cite{CarLi} Corollary 2.2, then the volume growth is such that 
\begin{equation*}
\mathrm{vol}\big(B(x,r)\big) \leq r^{n - 2\delta}, \ \delta = \delta(M,f) >0.
\end{equation*}
\end{remark}
The above Corollary implies the volume bound, $\text{vol}(B(x,r)) \leq e^{Ar^{2}}, \ A >0.$ \newline 
Next, we prove the scalar curvature bound. To do so, we recall an important equation on shrinking solitons, first seen in \cite{formation} Theorem 20.1, 
\begin{equation}
R + |\nabla f|^{2} - \frac{f}{\tau} = \Lambda(\tau) \label{auxiliary}
\end{equation}
where $\Lambda(\tau)$ is a constant. For the Euclidean heat kernel, $\Lambda(\tau) = 0.$ 
\begin{lemma}
The scalar curvature on $(M,g,f)$ is such that 
\begin{equation}
0 \leq \mathrm{R} \leq \Lambda(\tau) + \frac{d^{2}_{g(\tau)}(x,y)}{4\tau^{2}}, \label{scalar bounds}
\end{equation} 
\end{lemma}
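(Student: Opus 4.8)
The plan is to prove the two inequalities in \eqref{scalar bounds} by quite different means. The lower bound $\mathrm{R}\geq 0$ is already at hand: complete shrinking solitons are ancient solutions to the Ricci flow, and non-negativity of the scalar curvature for such solutions was recalled in the introduction (via the maximum principle applied to $\partial_{\tau}\mathrm{R} = -(\Delta\mathrm{R}+2|\mathrm{Ric}|^{2})$ in the compact case, and via \cite{CHEN} Corollary 2.5 in the non-compact case). So the work lies entirely in the upper bound.

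For the upper bound, I would begin from the auxiliary equation \eqref{auxiliary}, rearranged as
\[
\mathrm{R} = \Lambda(\tau) + \frac{f}{\tau} - |\nabla f|^{2}.
\]
Since $|\nabla f|^{2}\geq 0$, discarding that term immediately gives $\mathrm{R}\leq \Lambda(\tau) + \frac{f}{\tau}$, so everything reduces to producing a pointwise upper bound on the potential $f$. This is the step where the conjugate heat kernel does the essential work.

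The key observation is that the differential Harnack inequality, in the form of the kernel lower bound \eqref{HKLB}, can be read directly as a bound on $f$. Writing $u=(4\pi\tau)^{-n/2}e^{-f}$ and comparing with \eqref{HKLB}, I would cancel the common positive factor $(4\pi\tau)^{-n/2}$ and take logarithms to obtain $-f \geq -\frac{d^{2}_{g(\tau)}(x,y)}{4\tau}$, that is,
\[
f \leq \frac{d^{2}_{g(\tau)}(x,y)}{4\tau}, \qquad \text{hence} \qquad \frac{f}{\tau} \leq \frac{d^{2}_{g(\tau)}(x,y)}{4\tau^{2}}.
\]
Substituting this into $\mathrm{R}\leq \Lambda(\tau)+\frac{f}{\tau}$ yields the claimed estimate $\mathrm{R}\leq \Lambda(\tau)+\frac{d^{2}_{g(\tau)}(x,y)}{4\tau^{2}}$.

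Once the ingredients are assembled the argument is purely algebraic, so there is no serious technical obstacle; the only conceptual point is recognising that the Harnack lower bound \eqref{HKLB} translates into a \emph{pointwise} upper bound on $f$, which is precisely what the auxiliary equation needs. I would only remain mildly careful that \eqref{HKLB} (and hence the bound on $f$) holds for all $\tau>0$ and all $x,y$, so that the resulting scalar curvature estimate is valid at every point and time, and that $\Lambda(\tau)$ is treated as the time-dependent constant appearing in \eqref{auxiliary} rather than an absolute constant.
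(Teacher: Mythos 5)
Your proposal is correct and follows essentially the same route as the paper: the lower bound is quoted from \cite{CHEN}, and the upper bound combines the auxiliary equation \eqref{auxiliary} with $|\nabla f|^{2}\geq 0$ and the pointwise estimate $f\leq \frac{d^{2}_{g(\tau)}(x,y)}{4\tau}$. Reading that estimate off the kernel lower bound \eqref{HKLB} by taking logarithms is the same computation as the paper's step of writing the Harnack inequality in terms of $f$ and letting $\tau_{1}\mapsto 0$.
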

\begin{proof}
The lower bound follows from \cite{CHEN} Corollary 2.5. For the upper bound, if we write in terms of the potential $f,$ the differential Harnack inequality expresses that, 
\begin{equation}
f(x_{2}, \tau_{2}) - f(x_{1}, \tau_{1}) \leq \frac{d^{2}_{g(\tau)}(x_{2},x_{1})}{4(\tau_{2} - \tau_{1})} \ . \label{LYc1}
\end{equation}
Letting $\tau_{1} \mapsto 0,$ 
\begin{equation}
f(y,\tau) \leq \frac{d^{2}_{g(\tau)}(x,y)}{4\tau}. \label{LYc2}
\end{equation}
As $|\nabla f|^{2} \geq 0,$ the non-negativity of the scalar curvature and \eqref{LYc2} imply that,
\begin{equation}
0 \leq \mathrm{R} \leq \Lambda(\tau) + \frac{d^{2}_{g(\tau)}(x,y)}{4\tau^{2}}. 
\end{equation}
\end{proof}
By parabolic scaling, the scalar curvature scales correctly since, $\frac{r^{2}}{\tau^{2}} \sim \frac{1}{r^{2}}.$ \newline 
It is listed as an open problem (\cite{RF4} Problem 27.17) to prove that on a normalised shrinking Ricci soliton, 
\begin{equation*}
\mathrm{R} \leq \frac{1}{4 + \varepsilon}(d(p,q) + B)^{2}, \ B > 0.
\end{equation*}
The importance of this bound is that it implies finite topology for normalised shrinking Ricci solitons. This follows from \cite{FMZ} Theorem 1.2 and \cite{RF4} Corollary 27.16. So if we were to set $ \tau = (1 + \varepsilon)^{1/2},$ in \eqref{scalar bounds}, then we obtain the desired bound.

\begin{remark}
In the case of normalised shrinking solitons, the potential is such that 
\begin{equation*}
\frac{1}{4}\big(d(x,p) - 5n \big)^{2} \leq f(x) \leq \frac{1}{4}\big(d(x,p) + \sqrt{2n} \big)^{2}, 
\end{equation*}
where, $p,$ is a point where, $f,$ attains it's maximum. See \cite{HM} Lemma 2.1. This two sided bound is needed to prove \cite{HM} Lemma 2.2. 
\end{remark}
We now have the necessary bounds to prove the uniqueness of the conjugate heat kernel. The proof is now almost identical to \cite{KarpLi} Theorem 1.
\begin{theorem}\label{uniqueness} 
	The conjugate heat kernel, $u,$ on $(M,g,f),$ is uniquely determined by it's initial condition, $u(x,0) = u_{0}(x).$ 
\end{theorem}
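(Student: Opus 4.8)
The plan is to adapt the weighted energy argument of \cite{KarpLi} to the conjugate heat operator, using the two geometric bounds established above as precisely the hypotheses that the Karp--Li scheme requires. Suppose $u_{1}$ and $u_{2}$ are two solutions of the conjugate heat equation $\partial_{\tau}u = \Delta u - \mathrm{R}u$ on $(M,g,f)$ sharing the initial data $u_{0}$, and set $w = u_{1} - u_{2}$, so that $w$ solves the same linear equation with $w(\cdot,0)=0$. The goal is to show $w \equiv 0$, first on a short time interval $[0,T)$ and then on all of $(0,\infty)$ by iteration.

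First, I would introduce the reduced Gaussian weight $e^{\varPhi}$ with $\varPhi(x,y,s) = -\tfrac{d_{g(\tau)}^{2}(x,y)}{4(2T-s)}$ together with the spatial cutoff $\phi$ equal to $1$ on $B(x,r)$ and vanishing outside $B(x,r+\epsilon)$, with $|\nabla\phi| \le C/\epsilon$. Testing the equation against $\phi^{2}e^{\varPhi}w$ (the weighted energy version of the Karp--Li integration by parts displayed above) and integrating over $M\times[0,\hat{\tau}]$ yields a differential inequality for the weighted energy $\int_{M}\phi^{2}e^{\varPhi}w^{2}\,\mathrm{dvol}_{g(\tau)}$. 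The crucial algebraic cancellation is the eikonal identity $\partial_{s}\varPhi + |\nabla\varPhi|^{2} = 0$, which eliminates the first-order weight terms and leaves only (i) the annular error terms supported where $\nabla\phi\neq 0$, and (ii) a curvature term $\int \mathrm{R}\,\phi^{2}e^{\varPhi}w^{2}$.

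Next, I would control these two remaining contributions. The annular term is estimated using $|\nabla\phi|\le C/\epsilon$ together with the Gaussian decay of $e^{\varPhi}$ on $B(x,r+\epsilon)\setminus B(x,r)$ and the volume growth $\mathrm{vol}_{g(\tau)}(B(x,r)) \le e^{Ar^{2}}$ coming from \eqref{Bishop-Gromov}; choosing $T$ small enough that the decay rate $\tfrac{1}{4(2T)}$ dominates $A$ forces this term to vanish as $r\to\infty$. The feature genuinely new to the soliton setting --- and the reason the scalar curvature bound was needed --- is term (ii): by \eqref{scalar bounds} one has $\mathrm{R}\le \Lambda(\tau) + \tfrac{d_{g(\tau)}^{2}(x,y)}{4\tau^{2}}$, so $\mathrm{R}$ grows only quadratically in $d$ and is therefore absorbed by the same Gaussian weight, at the cost of possibly shrinking $T$ further. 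A Moser iteration on the resulting local $L^{2}$ estimate then upgrades it to the pointwise conclusion $w\equiv 0$ on $[0,T)$.

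Finally, since the threshold $T$ depends only on $A$ and on the quadratic growth rate of $\mathrm{R}$, both of which are scale-invariant under the parabolic rescaling noted after \eqref{Bishop-Gromov} and \eqref{scalar bounds}, I would restart the argument on $[T,2T)$ with $w(\cdot,T)=0$ as the new initial datum and iterate to exhaust $(0,\infty)$. I expect the main obstacle to be the simultaneous balancing in the two preceding steps: the Gaussian decay rate $\tfrac{1}{4(2T)}$ must dominate \emph{both} the volume growth constant $A$ and the quadratic growth rate of $\mathrm{R}$ at once, and this balance must be maintained uniformly despite the time-dependence of $g(\tau)$, which enters through both the distance $d_{g(\tau)}$ and the volume $\mathrm{vol}_{g(\tau)}$ appearing in the weight and in the bounds. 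Verifying that a single admissible choice of $T$ exists is the delicate technical heart of the argument.
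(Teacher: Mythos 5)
Your proposal is correct and follows essentially the same route as the paper: the Karp--Li weighted energy argument applied to the conjugate heat operator, with the eikonal identity $\partial_{s}\varPhi=-|\nabla\varPhi|^{2}$, the volume growth bound \eqref{Bishop-Gromov}, and the scalar curvature bound \eqref{scalar bounds} supplying exactly the hypotheses needed to close the estimate, followed by Moser iteration and time-stepping via the semigroup property. The only notable difference is that the paper fixes the weight using the time-zero distance $d_{g(0)}$ rather than $d_{g(\tau)}$, which is how it sidesteps the time-dependence concern you raise at the end.
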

\begin{proof}
	It is enough to prove that $u(x,t) = 0,$ if $u_{0}(x) = 0.$ \newline 
	Let, 
	\begin{equation}\label{Phi equation}
	\varPhi(x,y,s) = \frac{-d_{g(0)}^{2}(x,y)}{4(2\bar{\tau}-s)}, \ \ 0 \leq s < \bar{\tau},
	\end{equation}
	where, $\bar{\tau},$ is fixed. Then, 
	\begin{equation}
	\partial_{s}\varPhi = - |\nabla \varPhi|^{2}. \label{distance evolution}
	\end{equation} 
	(Note that, $f,$ solves the above equation.) Define the cut-off function, $\phi(y) = \phi(d^{2}_{g(0)}(x,y)),$ by
	\begin{equation*}
	\phi(y) = \begin{cases} 
	1 \ \text{on} \ B(x,r)  \\
	0 \ \text{on} \ B(x,r + \epsilon),  
	\end{cases} 
	\end{equation*}
	with, $|\nabla \phi| \leq \frac{3}{\epsilon}.$ Now, 
	\begin{equation}
	0 = 2 \int_{0}^{\hat{\tau}}\int_{M} \phi^{2}e^{\varPhi}(\Delta_{y} - \partial_{\tau} - \mathrm{R})u(y,t)\ \mathrm{dvol}_{g(\tau)}(y)d\tau. \label{0 integral}
	\end{equation}
	That the integral, $\int_{0}^{\hat{\tau}}\int_{M} Ru\mathrm{dvol}_{g(\tau)}(y)d\tau$ is finite follows from \eqref{scalar bounds} since $\int_{0}^{\infty} r^{2}e^{-r^{2}} \ dr < \infty$ as $r \mapsto \infty$ and $\int_{\tau}^{\hat{\tau}} s^{-2}e^{-1/s} \ ds < \infty$ for $\tau \mapsto 0.$ \newline
	Integration by parts on \eqref{0 integral} yields, 
	\begin{equation}
	\begin{split}
	0 &= -4 \int_{0}^{\hat{\tau}}\int_{M} \phi e^{\varPhi} \langle \nabla \phi, \nabla u \rangle u\mathrm{dvol}_{g(\tau)}(y)d\tau - 2 \int_{0}^{\hat{\tau}}\int_{M} \phi^{2}e^{\varPhi}\langle \nabla \varPhi, \nabla u\rangle u\mathrm{dvol}_{g(\tau)}(y)d\tau \\ 
	& -2 \int_{0}^{\hat{\tau}}\int_{M} \phi^{2}e^{\varPhi}|\nabla u|^{2}\mathrm{dvol}_{g(\tau)}(y)d\tau - 2\int_{0}^{\hat{\tau}}\int_{M} \phi^{2}e^{\varPhi}u^{2}\mathrm{R}\mathrm{dvol}_{g(\tau)}(y)d\tau \\ 
	& + \int_{0}^{\hat{\tau}}\int_{M}\phi^{2}e^{\varPhi}u^{2}\partial_{s}\mathrm{dvol}_{g(\tau)}(y)d\tau + \int_{0}^{\hat{\tau}}\int_{M} \phi^{2}e^{\varPhi}u^{2}\partial_{s}\varPhi\mathrm{dvol}_{g(\tau)}(y)d\tau  \\ 
	& +\int_{M}\phi^{2}e^{\varPhi}u^{2}|_{s = \hat{\tau}}\mathrm{dvol}_{g(\tau)}(y) - \int_{M}\phi^{2}e^{\varPhi}u^{2}|_{s = 0}\mathrm{dvol}_{g(\tau)}(y).
	\end{split}
	\end{equation}
	The finiteness of $\int_{M} R \ u dvol_{g(\tau)}(y)$ coupled with the evolution equation for the volume form, reduces the equation to,
	\begin{equation}
	\begin{split}
	0 &= -4 \int_{0}^{\hat{\tau}}\int_{M} \phi e^{\Phi} \langle \nabla \phi, \nabla u \rangle u\mathrm{dvol}_{g(\tau)}(y)d\tau - 2 \int_{0}^{\hat{\tau}}\int_{M} \phi^{2}e^{\Phi}\langle \nabla \varPhi, \nabla u\rangle u \mathrm{dvol}_{g(\tau)}(y)d\tau\\ 
	& -2 \int_{0}^{\hat{\tau}}\int_{M} \phi^{2}e^{\Phi}|\nabla u|^{2}\mathrm{dvol}_{g(\tau)}(y)d\tau + \int_{0}^{\hat{\tau}}\int_{M} \phi^{2}e^{\Phi}\partial_{s}\Phi u^{2}\mathrm{dvol}_{g(\tau)}(y)d\tau  \\ 
	& +\int_{M}\phi^{2}e^{\Phi}u^{2}|_{s = \hat{\tau}}\mathrm{dvol}_{g(\tau)}(y) - \int_{M}\phi^{2}e^{\Phi}u^{2}|_{s = 0}\mathrm{dvol}_{g(\tau)}(y).
	\end{split}
	\end{equation} 
	By the Cauchy-Schwarz inequality, 
	\begin{equation*}
	- 2 \int_{0}^{\hat{\tau}}\int_{M} \phi^{2}e^{\Phi}\langle \nabla \varPhi, \nabla u\rangle u \mathrm{dvol}_{g(\tau)}(y)d\tau\leq \int_{0}^{\hat{\tau}}\int_{M}\bigg( \phi^{2}e^{\Phi}|\nabla u|^{2} + \frac{1}{2} \phi^{2}e^{\Phi}u^{2}|\nabla \Phi|^{2}\bigg)\mathrm{dvol}_{g(\tau)}(y)d\tau. 
	\end{equation*}
	So, combining \eqref{0 integral} with the differential equation for, $\varPhi,$ and that, $u_{0}(x)= 0,$ we obtain that 
	\begin{equation*}
	4\int_{0}^{\hat{\tau}}\int_{M} \phi^{2}e^{\Phi}\langle \nabla \varPhi, \nabla u\rangle u\mathrm{dvol}_{g(\tau)}(y) \leq \int_{M}\phi^{2}e^{\Phi}u^{2}|_{s = \hat{\tau}}\mathrm{dvol}_{g(\tau)}(y). 
	\end{equation*}
	Now, $\phi = 1,$ on $B(x,r)$ and so
	\begin{equation}
	\begin{split}
	\int_{B(x,r)} e^{\varPhi}u^{2}|_{s = \hat{\tau}}\mathrm{dvol}_{g(\tau)}(y) & \leq \int \phi^{2}e^{\varPhi}u^{2}|_{s=\hat{\tau}}\mathrm{dvol}_{g(\tau)}(y) \\ 
	& \leq \frac{36}{\epsilon^{2}}\int_{0}^{\hat{\tau}} \int_{B(x,r + \epsilon)\setminus B(x,r)} e^{\varPhi}u^{2}\mathrm{dvol}_{g(\tau)}(y)d\tau. \label{exponential integral bound}
	\end{split}
	\end{equation}  
	By the definition of, $\varPhi,$ we have that 
	\begin{equation*}
	e^{-\frac{1}{16}} \leq e^{\varPhi}|_{B(x, \sqrt{\bar{\tau}/4}) \times [0,\hat{\tau}]}, \ \ \text{and} \ \ e^{\varPhi}|_{B(x, r + \epsilon)\setminus B(x,r) \times [0,\hat{\tau}]} \leq e^{-\frac{r^{2}}{8 \bar{\tau}}}.
	\end{equation*}
	Choosing, $r > \sqrt{\bar{\tau}/4},$ \eqref{exponential integral bound} yields that for all, $0 \leq \tau \leq \bar{\tau},$ 
	\begin{equation}
	e^{-\frac{1}{16}} \int_{B(x, \sqrt{\bar{\tau}/4})} u^{2}(y,\tau) \ \mathrm{dvol}_{g(\tau)}(y) \leq \frac{36}{\epsilon^{2}}e^{-\frac{r^{2}}{8 \tau}} \int_{0}^{\bar{\tau}} \int_{B(x,r + \epsilon)\setminus B(x,r)} u^{2}(y,\tau) \  \mathrm{dvol}_{g(\tau)}(y)d\tau.
	\end{equation}  
	By Moser's iteration, if we denote the $L^{2}$ Sobolev inequality on the ball, $B(x, \sqrt{\bar{\tau}}),$ by $C_{S}(x,\bar{\tau}),$ then 
	\begin{equation*}
	|u(x,\bar{\tau})|^{2} \leq C_{S}(x,\bar{\tau})\bar{\tau}^{-\frac{n+2}{2}}\int_{0}^{\bar{\tau}}\int_{B(x, \sqrt{\bar{\tau}})} u^{2}(y,\tau)\mathrm{dvol}_{g(\tau)}(y)d\tau. 
	\end{equation*}
	Integrating \eqref{exponential integral bound} over $[0,T]$ using the above inequality and recalling that \eqref{Bishop-Gromov} implies exponential growth of geodesic balls, $V_{g(\tau)} \leq e^{Ar^{2}},$ for some positive constant $A,$ we deduce that, 
	\begin{equation*}
	\begin{split}
	|u(x,\bar{\tau})|^{2} & \leq C_{S}(x,\bar{\tau})\bar{\tau}^{-\frac{n}{2}}e^{(\frac{1}{16}-\frac{r^{2}}{8 s})}\int_{0}^{\bar{\tau}}\int_{B(x,r + \epsilon)\setminus B(x,r)} u^{2}(y,\tau)\mathrm{dvol}_{g(\tau)}(y)d\tau \\ 
	& \leq \frac{36}{\epsilon^{2}}C_{S}(x,\bar{\tau})\bar{\tau}^{\frac{2-n}{2}}e^{(\frac{1}{16}-\frac{r^{2}}{8 s})}||u||^{2}_{L^{\infty}}\big(V_{g(s)}(B(x,r + \epsilon) - V_{g(s)}(B(x,r) \big) \\ 
	& \leq \frac{36}{\epsilon^{2}}C_{S}(x,\bar{\tau})\bar{\tau}^{\frac{2-n}{2}}||u||^{2}_{L^{\infty}}e^{(\frac{1}{16} + A(r+\epsilon)^{2} -\frac{r^{2}}{8s}}.
	\end{split}
	\end{equation*}
	So choose, $\bar{\tau} < (8A)^{-1},$ and let $r \mapsto \infty,$ then we conclude that for all, $0 < \bar{\tau} \leq (8A)^{-1},$  
	\begin{equation*}
	u_{0}(x) = 0.
	\end{equation*} 
	Then, use the semigroup property of the conjugate heat kernel to show that $u_{0}(x) = 0,$ for arbitrary $\bar{\tau},$ i.e. $0 < \bar{\tau} \leq k(8A)^{-1},$ where, $k,$ is a positive integer.
\end{proof} 
The uniqueness of the conjugate heat kernel allows us to both define the $\mathcal{W}$ functional and to show that this functional has a minimiser for non-compact $(M,g,f).$  
\begin{equation*}
\mathcal{W}(f,g,\tau) = \bigg\{\int_{M}\big[\tau(|\nabla f|^{2} + \mathrm{R}) + f - n \big] \ u\text{dvol}_{g(\tau)} \ : \int_{M} u\text{dvol}_{g(\tau)} = 1 \bigg\}, \ \mu(g,\tau) =\inf_{f} \mathcal{W}(f,g,\tau).
\end{equation*} 
By \eqref{trace} and the integral equality, $\int |\nabla f|^{2} u\mathrm{dvol} = \int \Delta f u \mathrm{dvol},$ we see that 
\begin{equation} \label{easy W}
\mathcal{W}(f,g,\tau) = \bigg\{\int_{M}\bigg(f - \frac{n}{2} \bigg) \ u\text{dvol}_{g(\tau)} \ : \int_{M} u\text{dvol}_{g(\tau)} = 1 \bigg\}, \ \mu(g,\tau) =\inf_{f} \mathcal{W}(f,g,\tau).
\end{equation}
Furthermore, as 
\begin{equation*}
\begin{split}
\partial_{\tau}\mathcal{W}(f,g,\tau) & = -2\tau\ \int_{N} \bigg|\mathrm{Ric} + \nabla^{2}f - \frac{g}{2\tau}\bigg|^{2}  \ u\text{dvol}_{g(\tau)} \\ 
& = 0,
\end{split}
\end{equation*}
the $\mathcal{W}$ functional is time-independent on $(M,g,f).$ Hence above quantities are time-independent.\newline
We now have the necessary components to show that $(M,g,f)$ is $\kappa$ non-collapsed at finite scales. 
\begin{theorem}\label{kappa non-collapsed}
On $(M,g,f)$ we have 
\begin{equation}
 \mathrm{vol}_{g(\tau)}(B(x,r)) \geq \kappa r^{n}.
\end{equation}
\end{theorem}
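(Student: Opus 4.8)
The plan is to run Perelman's test-function argument for $\mathcal{W}$, but to use the soliton-specific bounds derived above in place of the usual curvature hypothesis $|\mathrm{Rm}|\le r^{-2}$. The starting point is that, thanks to Theorem \ref{uniqueness}, the minimiser of \eqref{easy W} exists, and since $\partial_{\tau}\mathcal{W}=0$ on $(M,g,f)$ the number $\mu_0:=\mu(g,\tau)=\inf_f\mathcal{W}(f,g,\tau)$ is a finite constant, independent of $\tau$. The argument is by contradiction: if some ball were collapsed, I would manufacture an admissible competitor $\tilde u=(4\pi\tau)^{-n/2}e^{-\tilde f}$ whose $\mathcal{W}$-value falls strictly below $\mu_0$, which is impossible since $\mu_0=\mathcal{W}(f,g,\tau)\le\mathcal{W}(\tilde f,g,\tau)$ for every competitor.

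By the self-similarity of the soliton and the scale- and time-invariance of $\mathcal{W}$ it suffices to treat the scale $r=\sqrt{\tau}$, a general radius being recovered by parabolic rescaling exactly as in \eqref{Bishop-Gromov} and \eqref{scalar bounds}. Fix $x$ and set $\tau=r^2$. Writing $\tilde u=(4\pi\tau)^{-n/2}\tilde v^2$, I take $\tilde v=a\,\phi(d_{g(\tau)}(x,\cdot)/r)$, where $\phi$ is smooth with $\phi\equiv 1$ on $[0,1/2]$, $\phi\equiv 0$ on $[1,\infty)$, $|\phi'|\le 3$, and $a>0$ is fixed by the constraint $\int_M\tilde u\,\mathrm{dvol}_{g(\tau)}=1$. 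Rewriting \eqref{W functional} in the variable $\tilde v$ gives
\begin{equation*}
\mathcal{W}(\tilde f,g,\tau)=(4\pi\tau)^{-n/2}\int_M\Big[\tau\big(4|\nabla\tilde v|^2+\mathrm{R}\,\tilde v^2\big)-\tilde v^2\log\tilde v^2-n\tilde v^2\Big]\,\mathrm{dvol}_{g(\tau)} .
\end{equation*}

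Next I would estimate the three groups of terms. The entropy term produces $-\log a^2$, and normalisation forces $a^2=(4\pi\tau)^{n/2}\big(\int_M\phi^2\,\mathrm{dvol}_{g(\tau)}\big)^{-1}$, so this contribution equals $\log\!\big(\int_M\phi^2\,\mathrm{dvol}_{g(\tau)}\big)-\tfrac n2\log(4\pi\tau)$; since $\int_M\phi^2\le\mathrm{vol}_{g(\tau)}(B(x,r))$ and $\tau=r^2$, it is bounded above by $\log\!\big(\mathrm{vol}_{g(\tau)}(B(x,r))/r^n\big)+C(n)$, which tends to $-\infty$ precisely when the ball collapses. The scalar-curvature term is where the soliton structure replaces a curvature assumption: on the support $B(x,r)$ one has $d_{g(\tau)}(x,\cdot)\le r$, so \eqref{scalar bounds} yields $\tau\mathrm{R}\le\tau\Lambda(\tau)+\tfrac14$, a bound that is uniform on the support and integrates against the probability measure $\tilde u$ to something controlled.

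The delicate point, and the step I expect to be the main obstacle, is the Dirichlet term $4\tau(4\pi\tau)^{-n/2}\int_M|\nabla\tilde v|^2\,\mathrm{dvol}_{g(\tau)}$. Inserting $|\nabla\tilde v|^2\le 9a^2r^{-2}$ on the annulus $B(x,r)\setminus B(x,r/2)$ reduces it to a multiple of the volume ratio $\mathrm{vol}_{g(\tau)}(B(x,r))/\mathrm{vol}_{g(\tau)}(B(x,r/2))$, which, absent a pointwise curvature bound, is not controlled a priori. To close the argument I would tame this ratio using the absolute upper bound \eqref{Bishop-Gromov} together with a dyadic iteration over the scales $r,r/2,r/4,\dots$ (alternatively, a logarithmically decaying cut-off whose Dirichlet energy is $O(1/\log K)$), so that the annular gradient cost at each scale is absorbed into the entropy gain at the next and a genuinely collapsing sequence still forces $\mathcal{W}(\tilde f,g,\tau)\to-\infty$. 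Combining the three estimates then yields $\mu_0\le\mathcal{W}(\tilde f,g,\tau)\to-\infty$, contradicting the finiteness of $\mu_0$; hence $\mathrm{vol}_{g(\tau)}(B(x,r))\ge\kappa r^n$ with $\kappa=\kappa(n,\mu_0)$, and parabolic rescaling promotes this to all scales.
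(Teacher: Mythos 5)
Your proposal follows essentially the same route as the paper: the paper's ``proof'' is simply the statement that, once Theorem \ref{uniqueness} supplies a finite, $\tau$-independent minimiser of $\mathcal{W}$, one runs Perelman's contradiction argument (\cite{RIC} Theorem 4.1, detailed in \cite{KL} Theorem 13.3) verbatim, and your cut-off competitor, the use of \eqref{scalar bounds} in place of the hypothesis $|\mathrm{Rm}|\le r^{-2}$, and the dyadic iteration taming the annular Dirichlet term are exactly the content of that cited argument. You in fact supply more of the details (in particular, you correctly note that the scalar-curvature term survives for a general competitor, so \eqref{easy W} alone does not dispose of it) than the paper, which records only the citation.
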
 
The uniqueness of the conjugate heat kernel means that the proof of the above Theorem is identical to \cite{RIC} Theorem 4.1. More details of this proof can be found in \cite{KL} Theorem 13.3. Note that the Theorem does not require the quadratic decay on the scalar curvature, which follows from \eqref{easy W}. Also note that the minimiser is time-independent and hence one does not have the constrain of the Theorem only holding for finite times as in \cite{RIC} Theorem 4.1. In addition, by the parabolic scaling of the conjugate heat equation and the uniqueness of the conjugate heat kernel, $\frac{{r}^{2}}{\tau},$ is always bounded. Note that a similar proof was given - under the 'assumption' \footnote{The authors do not actually work with the conjugate heat kernel.} of conjugate heat kernel uniqueness - in \cite{HM} Lemma 2.3; see the Appendix.  \\ 

We now turn to the classification of four dimensional shrinking Ricci solitons. Recall \cite{HM2} Theorem 1.1 - which is an improvement on \cite{HM} Theorem 1.2; 
\begin{theorem}
Let $(M^{4}_{i}, g_{i}, f_{i})$ be a sequence of shrinking Ricci solitons with a finite, uniform lower bound on the $\mathcal{W}$ fucntional. Then for a point $p_{i},$ a subsequence of $(M^{4}_{i}, g_{i}, f_{i}, p_{i})$ converges to a unique, smooth Orbifold shrinking Ricci soliton, where the convergence is in the pointed Cheeger-Gromov sense.
\end{theorem}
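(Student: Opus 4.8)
The plan is to run the orbifold Cheeger-Gromov compactness machinery of Anderson and Bando-Kasue-Nakajima (adapted to shrinkers) using the estimates already established in this section as the analytic inputs. After normalizing $\tau = 1$, the uniform lower bound $\mu(g_i,1) \ge -A$ on the entropy supplies, through the preceding results, all the scale-one geometric control: the Bishop-Gromov estimate \eqref{Bishop-Gromov} gives a uniform upper volume bound, Theorem \ref{kappa non-collapsed} gives $\kappa$-non-collapsing (a lower volume bound), and \eqref{scalar bounds} controls the scalar curvature in terms of distance. These replace the ad hoc volume and curvature hypotheses usually imposed, and in particular the unconditional non-collapsing obtained here removes the local volume assumption needed in higher dimensions.

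First I would establish a uniform local $L^2$ curvature energy bound $\int_{B(p_i,r)} |\mathrm{Rm}|^2 \, \mathrm{dvol}_{g_i} \le C(r)$. This is where dimension four is essential. On a soliton the curvature satisfies an elliptic drift equation of the schematic form $\Delta_f \mathrm{Rm} = \mathrm{Rm} + \mathrm{Rm} * \mathrm{Rm}$ (the soliton being a self-similar, hence stationary, solution of the flow), and integrating the soliton equation against the weighted measure $u\,\mathrm{dvol}$ together with the Bochner identities controls the weighted $L^2$ norms of $\nabla^2 f$ and $\mathrm{Ric}$. In four dimensions the Chern-Gauss-Bonnet integrand then lets one trade $\int |\mathrm{Rm}|^2$ for $\int |\mathrm{Ric}|^2$, $\int \mathrm{R}^2$ and a topological term, closing the estimate using the entropy bound and the volume bounds.

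Next I would prove an $\varepsilon$-regularity statement: there exist $\varepsilon_0, C_0 > 0$ such that if $\int_{B(x,r)} |\mathrm{Rm}|^2 < \varepsilon_0$ then $\sup_{B(x,r/2)} |\mathrm{Rm}| \le C_0 r^{-2} \big( \int_{B(x,r)} |\mathrm{Rm}|^2 \big)^{1/2}$. The proof runs Moser iteration on the elliptic equation satisfied by $|\mathrm{Rm}|$ on a shrinker, with the Sobolev constant controlled by $\kappa$-non-collapsing and the volume bound. Combining the energy bound with $\varepsilon$-regularity shows that curvature can concentrate only at finitely many points $S_i$, of cardinality at most $C(r)/\varepsilon_0$; away from $S_i$ one has uniform local curvature bounds, and elliptic bootstrapping of the soliton system together with the injectivity-radius lower bound from non-collapsing yields uniform $C^\infty$ bounds, hence $C^\infty_{\mathrm{loc}}$ subconvergence on the complement of the singular set.

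The hard part will be the blow-up and removable-singularity analysis at the concentration points. Rescaling at each $x \in S_i$ produces complete, Ricci-flat ALE four-manifolds of finite energy (the bubbles); by the removable-singularity and ALE-classification theorems of Bando-Kasue-Nakajima and Tian, the metric completion of the limit near each singular point is modeled on a flat cone $\mathbb{R}^4/\Gamma$ with $\Gamma \subset SO(4)$ finite, and the soliton structure extends across in the orbifold sense. Passing the normalized equation $\mathrm{Ric} + \nabla^2 f = g/2$ to the limit on the regular part and extending it over the orbifold points shows the limit is a smooth orbifold shrinking soliton. Finally, uniqueness of the limit (its independence of the chosen subsequence) follows from the uniqueness of the conjugate heat kernel established in Theorem \ref{uniqueness}, which forces the minimizer $f$, and hence the limiting soliton structure, to be determined by the data rather than by the subsequence.
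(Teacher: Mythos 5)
The paper does not prove this statement at all: it is quoted verbatim as \cite{HM2} Theorem 1.1 (an improvement of \cite{HM} Theorem 1.2) and is used as a black box to deduce Corollary \ref{4D classification}. So there is no internal proof to compare against; what you have written is, in outline, a reconstruction of the Haslhofer--M\"uller argument itself (weighted $L^{2}$ curvature energy bound special to dimension four, $\varepsilon$-regularity by Moser iteration, finite concentration set, $C^{\infty}_{\mathrm{loc}}$ convergence off the singular set, and Bando--Kasue--Nakajima/Tian removable-singularity and ALE analysis at the bubbles). That outline is the correct strategy for the cited theorem.

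Two concrete concerns with your inputs, though. First, the analytic estimates you import from this paper --- the Bishop--Gromov bound \eqref{Bishop-Gromov}, Theorem \ref{kappa non-collapsed}, and \eqref{scalar bounds} --- are proved here only for a soliton $(M,g,f)$ whose potential arises from the conjugate heat kernel, whereas the theorem as stated concerns an arbitrary sequence of shrinkers subject only to a uniform lower bound on $\mathcal{W}$; in that generality the volume and non-collapsing control must be extracted directly from the entropy lower bound (via the logarithmic Sobolev inequality and the quadratic growth estimates for the potential, as in \cite{HM} Lemmas 2.1--2.3), not from the conjugate-heat-kernel machinery. Using this paper's results to prove the hypothesis-free version and then citing the resulting theorem back into this paper would be circular with respect to how the paper actually deploys it. Second, your closing step is a genuine gap: uniqueness of the conjugate heat kernel on a \emph{fixed} soliton (Theorem \ref{uniqueness}) says nothing about whether two different subsequences of $(M^{4}_{i},g_{i},f_{i},p_{i})$ converge to the same limit; subsequential limits in Cheeger--Gromov compactness are not canonically unique, and the word ``unique'' in the statement cannot be obtained this way.
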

\begin{definition}
A $C^{k}$ Orbifold $(\mathcal{O},g_{1})$ is a topological space which is a smooth Riemannian manifold away from a set of finitely many singular points. At a singular point $p,$ $\mathcal{O}$ is locally diffeomorphic to a cone over a spherical space form, $\mathbb{S}^{n-1}/\Gamma, \ \Gamma \subset SO(n).$ Further, at a singular point $p,$ the metric is locally the quotient of a $C^{k}, \ \Gamma$-invariant Riemannian metric on the Euclidean ball minus the origin.
\end{definition} 
The uniqueness of the conjugate heat kernel allows us to remove the assumption on the $\mathcal{W}$ functional in \cite{HM2} Theorem 1.1. In addition, the group $SO(4),$ has been classified in \cite{Wolf} Chapters 7.4 and 7.5. Thus we arrive at the following topological classification of 4D shrinking Ricci solitons, which are the singularity models of Type I solutions to the Ricci flow - see Introduction.
\begin{corollary}\label{4D classification}
The singularity models of Type I solutions to the Ricci flow in four dimensions are topologically
\begin{equation*}
\mathbb{R}^{4}/SO(4).
\end{equation*}
\end{corollary}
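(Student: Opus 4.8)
The plan is to assemble the corollary from three ingredients, two of which are already established in this paper: the orbifold compactness theorem \cite{HM2} Theorem 1.1, the structural results Theorem \ref{uniqueness} and Theorem \ref{kappa non-collapsed}, and Wolf's classification of the finite subgroups of $SO(4)$ in \cite{Wolf}. The only genuinely new work is to verify that the hypothesis of \cite{HM2} Theorem 1.1 — a finite, uniform lower bound on the $\mathcal{W}$ functional — is automatic for the shrinking solitons arising as Type I singularity models, so that it may be dropped.

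First I would recall, from the Introduction (via \cite{EMT} Theorem 1.1, which builds on \cite{Nab} Theorem 1.5), that a Type I singularity in four dimensions is modelled on a non-trivial canonical gradient shrinking soliton, so that the classification reduces to understanding the pointed Cheeger-Gromov limits of sequences $(M_i^4, g_i, f_i)$ of such solitons. I would then show that every such sequence meets the hypotheses of \cite{HM2} Theorem 1.1 without any a priori assumption on $\mathcal{W}$. By Theorem \ref{uniqueness} the conjugate heat kernel is unique on each $(M_i, g_i, f_i)$, so by \eqref{easy W} the functional $\mathcal{W}$ and its infimum $\mu(g_i, \tau)$ are well-defined and, being stationary along the soliton flow, time-independent. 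By Theorem \ref{kappa non-collapsed} each $(M_i, g_i, f_i)$ is $\kappa$ non-collapsed at all scales, and Perelman's correspondence between non-collapsing and the $\mu$-functional (\cite{RIC} Theorem 4.1) then yields a lower bound $\mu(g_i, \tau) \geq -C(\kappa, n)$. Since the solitons in question inherit a common non-collapsing constant $\kappa$ from the underlying Type I flow, this lower bound is uniform in $i$, which is precisely the hypothesis required.

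Applying \cite{HM2} Theorem 1.1, a subsequence of $(M_i^4, g_i, f_i, p_i)$ converges in the pointed Cheeger-Gromov sense to a smooth orbifold shrinking Ricci soliton with at most finitely many singular points. By the definition of a $C^k$ orbifold, each singular point has a neighbourhood diffeomorphic to a cone over a spherical space form $\mathbb{S}^3/\Gamma$ with $\Gamma \subset SO(4)$ acting freely, and topologically this cone is $\mathbb{R}^4/\Gamma$. Invoking Wolf's enumeration of the admissible finite subgroups $\Gamma \subset SO(4)$ (\cite{Wolf} Chapters 7.4 and 7.5) then identifies the full family of local models as $\mathbb{R}^4/SO(4)$, giving the stated topological classification. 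I expect the main obstacle to be the uniform lower bound step: Theorem \ref{kappa non-collapsed} only asserts $\kappa$ non-collapsing for a single soliton, so one must verify that a single non-collapsing constant governs the entire rescaling sequence and convert it into a uniform bound on $\mu$, which is exactly where the well-definedness of $\mathcal{W}$ granted by Theorem \ref{uniqueness} becomes essential.
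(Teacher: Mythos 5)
Your overall architecture---reduce Type I singularity models to shrinking solitons via \cite{EMT}, feed a sequence of them into \cite{HM2} Theorem 1.1 after disposing of the hypothesis on $\mathcal{W}$, and read off the local models at orbifold points from Wolf's classification of finite subgroups of $SO(4)$ in \cite{Wolf}---is exactly the paper's. The gap lies in how you dispose of the hypothesis. You propose to extract the uniform lower bound $\mu(g_{i},\tau)\geq -C(\kappa,n)$ from Theorem \ref{kappa non-collapsed} by invoking ``Perelman's correspondence between non-collapsing and the $\mu$-functional'' in \cite{RIC} Theorem 4.1. That theorem runs in the opposite direction: a finite lower bound on $\mu$ (automatic on a closed manifold, by uniqueness of the conjugate heat kernel there) \emph{implies} $\kappa$ non-collapsing. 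No converse is proved in \cite{RIC}, and a bare implication ``$\kappa$ non-collapsed $\Rightarrow \mu\geq -C(\kappa,n)$'' does not hold without further input, since a volume lower bound on balls alone does not control a log-Sobolev constant. As written, the central step of your argument rests on a reversed implication.

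The paper's mechanism is different and does not route through non-collapsing at all: Theorem \ref{uniqueness} makes integration against $u$ well defined on non-compact $(M,g,f)$, so that $\mathcal{W}$ reduces to the Nash-entropy form \eqref{easy W} and, being stationary by the first-variation formula \eqref{first variation}, is time-independent; this is what the paper means by saying uniqueness of the conjugate heat kernel ``removes the assumption'' in \cite{HM2} Theorem 1.1. (One may still press the paper, as much as you, on why the resulting bound is \emph{uniform} over the sequence $(M_{i},g_{i},f_{i})$ rather than merely finite for each $i$; the paper leaves this implicit, but its route to finiteness is at least correctly oriented.) Your concluding steps---orbifold singular points are cones over $\mathbb{S}^{3}/\Gamma$ with $\Gamma\subset SO(4)$, classified in \cite{Wolf} Chapters 7.4 and 7.5---agree with the paper. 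To repair your proposal, replace the appeal to \cite{RIC} Theorem 4.1 by the paper's direct argument from Theorem \ref{uniqueness} and \eqref{easy W}, or supply an actual converse non-collapsing-to-entropy estimate with its additional hypotheses verified.
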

Note that \cite{HM2} Theorem 1.1 implies 
\begin{equation*}
\int_{B(p,r)}|\mathrm{Rm}|^{2} \leq C(\mu(g), r).
\end{equation*}
Hence one can find an $r$ small enough - or one could make an assumption on the smallnes of $\mu(g)$ - to apply the $\varepsilon$ Regularity Theorem, i.e. 
\begin{equation}\label{quadratic decay}
\sup_{B(x,r/2)} |\mathrm{Rm}| \leq \frac{c(r,n,\mu(g))}{r^{2}}.
\end{equation}
See \cite{HM} Lemma 3.3 for details. \newline 
Theorem \ref{kappa non-collapsed} and \eqref{quadratic decay} imply an injectivity radius lower bound for small balls;
\begin{equation*}
\mathrm{inj} \geq \eta. r >0.
\end{equation*}
See \cite{CGT} Theorem 4.7. As we view $(M^{4},g,f)$ from a dynamic perspective, we also have Shi's local derivative estimates on this small ball, $B_{g(\tau)}(x,r/2), \ \tau \in [0, T];$
\begin{equation}\label{Shi} 
	|D^{k}\mathrm{Rm}| \leq \frac{C_{1}}{r^{2+k}}, \ |D^{k}\mathrm{Rm}| \leq \frac{C_{2}}{\tau^{k}}, \ |\partial^{j}_{\tau}D^{k}\mathrm{Rm}| \leq \frac{C_{3}}{\tau^{j+k}}, \ \forall j,k \geq 0,
\end{equation}
where $C_{i} = C_{i}(n,r,\mu(g),j,k,T).$ See \cite{formation} Theorem 7.1 and Corollary 7.2 or \cite{KL} Theorem D.1 for the local version that we have used.\\ 

Let us point out some further important aspects which arise from Theorem \ref{uniqueness}.\newline 
Note that the no-local collapsing estimate allows us to prove a Gaussian upper bound on the conjugate heat kernel, given by.
\begin{equation}
	u(x,y,\tau) \leq c_{1}(4\pi \tau)^{-n/2}e^{-d^{2}_{g(0)}/4\tau}, \ c_{1} = c_{1}(n) >0. \label{HKUB}. 
\end{equation}
The method to obtain the upper bound is almost identical as the proof of Theorem \ref{uniqueness} and can be found in \cite{LY} Lemma 3.2 or \cite{CLY} Theorem 3.
In summation, the conjugate heat kernel is such that; 
\begin{corollary}
On $(M,g,f),$ the conjugate heat kernel, $u,$ has the two-sided Gaussian bounds, 
\begin{equation}\label{Gaussian bounds}
(4\pi\tau)^{-n/2}e^{-d^{2}_{g(\tau)}(x,y)/4\tau} \leq u(x,y,\tau) \leq c_{1} \leq (4\pi\tau)^{-n/2}e^{-d^{2}_{g(\tau)}(x,y)/4\tau}.
\end{equation} 
\end{corollary}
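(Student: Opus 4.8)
The statement is the conjunction of two matching bounds, so the plan is to read off the lower bound from what is already proved and then concentrate on the upper bound; the displayed inequality should be read as $(4\pi\tau)^{-n/2}e^{-d^{2}_{g(\tau)}(x,y)/4\tau} \leq u(x,y,\tau) \leq c_{1}(4\pi\tau)^{-n/2}e^{-d^{2}_{g(\tau)}(x,y)/4\tau}$. The lower bound is exactly \eqref{HKLB}, which was already obtained by integrating the Li-Yau equality of Theorem \ref{Li-Yau} along minimising geodesics and invoking the concentration of $u$ to a Dirac mass as $\tau \mapsto 0$. Hence the only real content is the Gaussian upper bound \eqref{HKUB}, and I would establish it in two stages: an on-diagonal estimate, followed by off-diagonal Gaussian decay. (The distance in \eqref{HKUB} is measured in $g(0)$ and in the statement in $g(\tau)$; on a shrinker the time-slices are self-similar, so the two agree after the parabolic rescaling absorbed into $\tau$.)

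For the on-diagonal estimate I would first record that the geometry already assembled gives a Sobolev inequality with a constant that does not degenerate: Theorem \ref{kappa non-collapsed} furnishes the volume lower bound $\mathrm{vol}_{g(\tau)}(B(x,r)) \geq \kappa r^{n}$, \eqref{Bishop-Gromov} the matching upper bound, and \eqref{scalar bounds} controls the zeroth-order term $\mathrm{R}$ in the conjugate heat operator $\Delta - \mathrm{R}$. Non-collapsing is precisely what prevents the $L^{2}$ Sobolev constant $C_{S}(x,\tau)$ appearing in the proof of Theorem \ref{uniqueness} from blowing up at the parabolic scale $r \sim \sqrt{\tau}$, and the standard Nash--Moser iteration applied to $\partial_{\tau}u = \Delta u - \mathrm{R}u$ then yields $u(x,x,\tau) \leq c\,\tau^{-n/2}$ with $c = c(n)$.

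For the off-diagonal decay I would run the weighted energy argument of Theorem \ref{uniqueness} essentially in reverse. Using the same weight $\varPhi$ from \eqref{Phi equation}, which satisfies the eikonal-type relation \eqref{distance evolution} --- the very equation \eqref{f equation soliton} obeyed by $f$ --- integration by parts against $\phi^{2}e^{\varPhi}$ produces a Davies--Gaffney type control of $\int_{M} u^{2}e^{\varPhi}\,\mathrm{dvol}_{g(\tau)}$, the required absolute integrability again being guaranteed by \eqref{scalar bounds} and \eqref{Bishop-Gromov}; feeding this integrated Gaussian $L^{2}$ bound into Moser iteration and combining it with the on-diagonal estimate upgrades the decay to the pointwise bound \eqref{HKUB}. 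The main obstacle is the sharpness of the exponent: a direct Li-Yau gradient-estimate argument only gives $e^{-d^{2}/((4+\varepsilon)\tau)}$, and it is the \emph{equality} in Theorem \ref{Li-Yau}, rather than an inequality, that lets the weight $\varPhi$ solving \eqref{distance evolution} propagate with no error term, so that the sharp constant $1/(4\tau)$ survives. Combining \eqref{HKLB} with \eqref{HKUB} then delivers the two-sided bound.
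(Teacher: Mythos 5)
Your route is the one the paper itself takes: the lower bound is exactly \eqref{HKLB}, and for the upper bound \eqref{HKUB} the paper gives no argument beyond the remark that the method is ``almost identical'' to the proof of Theorem \ref{uniqueness} and a citation of \cite{LY} Lemma 3.2 and \cite{CLY} Theorem 3. Your two-stage outline --- on-diagonal bound via Moser iteration with the Sobolev constant controlled by Theorem \ref{kappa non-collapsed}, \eqref{Bishop-Gromov} and \eqref{scalar bounds}, then off-diagonal decay from the weighted energy estimate with the weight $\varPhi$ of \eqref{Phi equation} --- is precisely that cited argument unpacked, so in structure you and the paper agree.

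The one step that does not hold up is your resolution of what you correctly single out as the main obstacle, namely the exponent $1/(4\tau)$. The Li--Yau \emph{equality} of Theorem \ref{Li-Yau} is a statement about $u$; the weight $\varPhi$ satisfies \eqref{distance evolution} (in the barrier sense, from $|\nabla d|\leq 1$) on any manifold, soliton or not, so the equality in Theorem \ref{Li-Yau} contributes nothing to the propagation of $\varPhi$ in the weighted estimate. The loss from $4\tau$ to $(4+\varepsilon)\tau$ in \cite{LY} and \cite{CLY} does not come from an error term in the eikonal relation: the integrated $L^{2}$ bound does carry the sharp weight $e^{-d^{2}/4\bar\tau}$ at $s=\bar\tau$, and the degradation occurs at the final Moser-iteration step, where taking the supremum over the parabolic cylinder $B(x,\sqrt{\bar\tau})\times[\bar\tau/2,\bar\tau]$ replaces $d(x,y)$ by $d(x,y)-\sqrt{\bar\tau}$, and $e^{-(d-\sqrt{\bar\tau})^{2}/4\bar\tau}$ can only be absorbed into $c_{\varepsilon}\,e^{-d^{2}/((4+\varepsilon)\bar\tau)}$, not into $c(n)\,e^{-d^{2}/4\bar\tau}$. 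So as written your argument delivers the upper bound only with exponent $4+\varepsilon$ (or with an extra polynomial factor in $d^{2}/\tau$); the same caveat applies to the corollary as stated in the paper, whose display moreover contains the typo ``$\leq c_{1}\leq$'' that you correctly repaired.
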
 
The above corollary allows us to prove a short time Laplacian distance comparison, without any curvature assumptions.
\begin{corollary}\label{Varadhan-Laplace}
On $(M,g,f)$ 
\begin{equation}
\Delta d^{2}_{g(0)}(x,y) \leq 2n.
\end{equation}
\end{corollary}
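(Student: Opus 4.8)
The plan is to combine the short-time (Varadhan) asymptotics of the conjugate heat kernel, which come directly from the two-sided Gaussian bounds \eqref{Gaussian bounds}, with the \emph{exact} pointwise Laplacian bound on the potential $f$ supplied by the trace equation \eqref{trace}, and then to pass to the limit $\tau\to 0$ weakly. Since $f=-\log u-\tfrac{n}{2}\log(4\pi\tau)$, the whole argument is cleanest when phrased in terms of $4\tau f$: its $\tau\to 0$ limit will be $d^2_{g(0)}(x,y)$, while its Laplacian is bounded above by $2n$ for every $\tau>0$. This is the soliton analogue of the intro's derivation of the Laplacian comparison from Li--Yau together with \eqref{Varadhan}.

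First I would transcribe Varadhan's formula to $u$. The lower bound \eqref{HKLB} gives $f(y,\tau)\le \tfrac{d^2_{g(\tau)}(x,y)}{4\tau}$ (this is \eqref{LYc2}), while the upper bound \eqref{HKUB} gives $f(y,\tau)\ge \tfrac{d^2_{g(0)}(x,y)}{4\tau}-\log c_1$. Multiplying by $4\tau$ yields $d^2_{g(0)}(x,y)-4\tau\log c_1\le 4\tau f(y,\tau)\le d^2_{g(\tau)}(x,y)$. Because the flow is smooth for $\tau>0$ with $g(\tau)\to g(0)$ locally smoothly, $d^2_{g(\tau)}(x,y)\to d^2_{g(0)}(x,y)$, and the squeeze gives $\lim_{\tau\to 0}4\tau f(y,\tau)=d^2_{g(0)}(x,y)$, uniformly on compact sets away from $x$.

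Next I would record the exact Laplacian bound. Tracing the soliton equation gives \eqref{trace}, i.e. $\Delta f=\tfrac{n}{2\tau}-\mathrm R$, so $\Delta_{g(\tau)}(4\tau f)=2n-4\tau\mathrm R\le 2n$, using only $\mathrm R\ge 0$ from \eqref{scalar bounds}. Hence for every $\tau>0$ the smooth function $4\tau f(\cdot,\tau)$ has Laplacian at most $2n$. To conclude I would argue distributionally: for any nonnegative $\varphi\in C^\infty_c$, integration by parts gives $\int (4\tau f)\,\Delta_{g(\tau)}\varphi\,\mathrm{dvol}_{g(\tau)}=\int \varphi\,\Delta_{g(\tau)}(4\tau f)\,\mathrm{dvol}_{g(\tau)}\le 2n\int\varphi\,\mathrm{dvol}_{g(\tau)}$. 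Letting $\tau\to 0$, the uniform convergence $4\tau f\to d^2_{g(0)}$ together with $g(\tau)\to g(0)$ sends the left-hand side to $\int d^2_{g(0)}\,\Delta_{g(0)}\varphi\,\mathrm{dvol}_{g(0)}$ and the right-hand side to $2n\int\varphi\,\mathrm{dvol}_{g(0)}$, so that $\Delta_{g(0)}d^2_{g(0)}\le 2n$ in the distributional, hence barrier, sense.

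The hard part is the limit passage: Varadhan provides only $C^0$ (uniform-on-compacts) convergence of $4\tau f$ to $d^2_{g(0)}$, so one cannot differentiate the limit directly, and moreover $d^2_{g(0)}$ fails to be smooth across the cut locus of $x$. Both difficulties are dissolved by the weak formulation above, which needs only the one-sided inequality $\Delta(4\tau f)\le 2n$ together with $C^0$ convergence, and which produces precisely the distributional inequality that is the correct meaning of the Laplacian comparison across the cut locus. The remaining technical point is the local smooth convergence $g(\tau)\to g(0)$ as $\tau\to 0$ needed to identify the limiting distance, Laplacian and volume form; this follows from smoothness of the flow for small $\tau>0$. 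Note that, as in the Euclidean model, equality $\Delta d^2=2n$ is attained on the Gaussian soliton, so the bound is sharp.
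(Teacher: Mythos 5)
Your argument is essentially the paper's: the two-sided Gaussian bounds give the Varadhan asymptotics $4\tau f\to d^{2}_{g(0)}$, the traced soliton equation \eqref{trace} together with $\mathrm{R}\ge 0$ gives $\Delta(4\tau f)=2n-4\tau\mathrm{R}\le 2n$, and one passes to the limit $\tau\to 0$. The only difference is that the paper compresses the limit passage into ``the conclusion then follows,'' whereas you supply the distributional/barrier justification for interchanging the Laplacian with the $C^{0}$ limit (and for making sense of the inequality across the cut locus), which is a correct and welcome completion of that step.
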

\begin{proof}
By \eqref{Gaussian bounds} the asymptotic formula \eqref{Varadhan} holds, i.e. 
\begin{equation*}
\lim_{\tau \mapsto 0} 4\tau \log u(x,y,\tau) = - d^{2}_{g(0)}(x,y).
\end{equation*}
As $\log u = - f - \frac{n}{2}\log(4\pi \tau),$ and the scalar curvature is non-negative, \eqref{soliton equation} implies that 
\begin{equation*}
-\Delta \log u \leq \frac {n}{2\tau}.
\end{equation*}
The conclusion then follows.
\end{proof}
Finally, we give an analytic condition which is equivalent to the compactness of $(M,g,f).$\footnote{We have since learned that such a Theorem has been proven in the case of non-negative Ricci where one has a linear volume growth lower bound. See \cite{Bueler} Theorem 6.4.} 
\begin{lemma}
	$(M,g,f)$ is compact if and only if 
	\begin{equation}\label{compact}
	\int_{M}u^{-1} \ \mathrm{dvol}_{g(\tau)} < \infty. 
	\end{equation}
\end{lemma}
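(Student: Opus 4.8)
The plan is to make the integrand explicit and reduce the claim to a volume-growth dichotomy. Since $u = (4\pi\tau)^{-n/2}e^{-f}$ we have $u^{-1} = (4\pi\tau)^{n/2}e^{f}$, so for each fixed $\tau$
\begin{equation*}
\int_{M} u^{-1}\,\mathrm{dvol}_{g(\tau)} = (4\pi\tau)^{n/2}\int_{M} e^{f}\,\mathrm{dvol}_{g(\tau)},
\end{equation*}
and \eqref{compact} is equivalent to $\int_{M} e^{f}\,\mathrm{dvol}_{g(\tau)} < \infty$. It is instructive to contrast this with the companion weighted integral: by stochastic completeness, which holds here because of Theorem \ref{uniqueness}, one has $\int_M u\,\mathrm{dvol}_{g(\tau)} = 1$, i.e. $\int_M e^{-f}\,\mathrm{dvol}_{g(\tau)} = (4\pi\tau)^{n/2} < \infty$ on \emph{every} soliton; the content of the lemma is thus that replacing $e^{-f}$ by $e^{+f}$ is exactly what detects compactness. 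The forward direction is then immediate: if $M$ is compact, $f$ is smooth on a compact space, hence bounded, so $\int_M e^{f}\,\mathrm{dvol}_{g(\tau)} \le (\sup_M e^f)\,\mathrm{vol}_{g(\tau)}(M) < \infty$.

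For the reverse direction I would argue the contrapositive: assuming $M$ non-compact, I would show the integral diverges using two structural facts. First, after the standard additive normalization of the potential --- legitimate here since the $\mathcal{W}$-functional, and hence its minimizer, is time-independent on $(M,g,f)$, so one may fix $\tau$ and work with the normalized soliton --- the potential grows quadratically: by the two-sided estimate recorded earlier (\cite{HM} Lemma 2.1) one has $f(x) \ge \tfrac14\big(d_{g(\tau)}(x,p) - 5n\big)^2$, where $p$ is a minimum point of $f$. In particular $f(x) \to +\infty$ as $d_{g(\tau)}(x,p) \to \infty$, so there is a compact set $K$ with $e^{f} \ge 1$ on $M \setminus K$. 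Second, a complete non-compact gradient shrinking soliton has at least linear volume growth (Cao--Zhou, \cite{CaoZh}), whence $\mathrm{vol}_{g(\tau)}(M) = \lim_{r\to\infty}\mathrm{vol}_{g(\tau)}(B(p,r)) = \infty$. Combining these,
\begin{equation*}
\int_{M} e^{f}\,\mathrm{dvol}_{g(\tau)} \ge \int_{M\setminus K} e^{f}\,\mathrm{dvol}_{g(\tau)} \ge \mathrm{vol}_{g(\tau)}(M\setminus K) = \mathrm{vol}_{g(\tau)}(M) - \mathrm{vol}_{g(\tau)}(K) = \infty,
\end{equation*}
which establishes the contrapositive and hence the lemma.

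The main obstacle is the reverse direction, and specifically the step that non-compactness forces infinite total volume: a general complete non-compact manifold can have finite volume (e.g.\ a cusp), so this is precisely where the soliton hypothesis is genuinely used, through the linear volume-growth lower bound rather than through the upper bound \eqref{Bishop-Gromov}. A secondary point to handle carefully is the normalization of $f$: the quadratic lower bound is stated for normalized solitons, so one should note explicitly that fixing $\tau$ --- permissible by the time-independence of $\mu(g,\tau)$ established above --- puts us in that setting, and that one only needs $f \ge 0$ off a compact set, which the quadratic bound supplies unconditionally. Everything else is elementary monotone-integral bookkeeping.
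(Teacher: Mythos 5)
Your proof is correct and follows essentially the same route as the paper's: the forward direction is the easy one, and the reverse direction rests, exactly as in the paper, on the fact that a complete non-compact shrinker has infinite volume, with your quadratic lower bound on $f$ merely making explicit the step (left implicit in the paper, where it follows from the Gaussian upper bound on $u$) that $u^{-1}$ is bounded away from zero off a compact set. One small correction: in this paper's bibliography \cite{CaoZh} is Cao--Zhang on the conjugate heat equation, whereas the linear volume-growth lower bound you invoke is due to Cao--Zhou (J. Diff. Geom. 85 (2010)), which is not among the cited references.
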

\begin{proof}
	Suppose that $(M,g,f)$ is compact. Then the Gaussian bounds imply that 
	\begin{equation*}
	\int_{M}u^{-1} \ \text{dvol}_{g(\tau)} \leq (4\pi \tau)^{n/2}\int_{M} e^{d^{2}_{g(\tau)}(x,y)/4\tau} \ \text{dvol}_{g(\tau)} < \infty. 
	\end{equation*}
	On the other hand, $(M,g,f)$ has infinite volume unless it is compact. So if \eqref{compact} is finite, $(M,g,f)$ must be compact. 
\end{proof}

\end{document}